\documentclass{article}
\usepackage{amsmath,amsthm,amssymb,amsfonts}
\usepackage[latin1]{inputenc}
\usepackage{graphics,graphicx}
\usepackage{hyperref}
\evensidemargin = \oddsidemargin
\textwidth = 12.5cm
\binoppenalty = 10000
\relpenalty = 10000
% Theorem-like environments
\theoremstyle{plain}
\newtheorem{theorem}{Theorem}
\newtheorem*{theorem'}{Theorem A'}
\newtheorem{proposition}{Proposition}
\newtheorem{lemma}[proposition]{Lemma}
\theoremstyle{remark}

\def\norm#1{\left\lVert #1 \right\rVert}

\def\N{{\mathbb N}}

\def\R{{\mathbb R}}
\def\Z{{\mathbb Z}}
\def\CC{{\mathcal C}}
\def\Cinf{{\mathcal C^\infty}}
\def\DD{{\mathcal D}}

\def\ZZ{{\mathcal Z}}
\def\id{{\mathrm{id}}}
\def\ly{\fontencoding{U}\fontfamily{lasy}\fontseries{m}\fontshape{n}\selectfont}
\def\guil#1{\leavevmode\hbox{{\ly(\kern-0.20em(\kern+0.20em}}\nobreak{}\,#1\,%
  \nobreak\hbox{{\ly\kern+0.20em)\kern-0.20em)}}}
\def\up{\textup}
\def\from{\colon} 
 
\def\bigbars#1{\bigl\lvert #1 \bigr\rvert} 
 
\def\lrbars#1{\left\lvert #1 \right\rvert} 
\def\Bars#1{\lVert #1 \rVert} 
\def\bigBars#1{\bigl\lVert #1 \bigr\rVert} 
 
\def\lrBars#1{\left\lVert #1 \right\rVert} 
\def\Supp{\mathop{\mathrm{Supp}}}

\def\res#1{\mathbin{|}{}_{#1}}

\begin{document}
\title{Arithmetic properties of centralizers of diffeomorphisms of the half-line}
\author{\begin{tabular}{c}
{Hélène \textsc{Eynard}}\\
\\
{\small{Institut de Mathématiques de Jussieu (UMR 7586)}} \\
{\small{Université Pierre et Marie Curie}}\\
{\small{4 place Jussieu, 75252 Paris Cedex 5, France}}\\
{\small\texttt{heynardb@math.jussieu.fr}}
\end{tabular}
}

\date{}

\maketitle

\begin{abstract}
Let $f$ be a smooth diffeomorphism of the half-line fixing only the origin and
$\ZZ^r_f$ its centralizer in the group of $\CC^r$ diffeomorphisms. According to
well-known results of Szekeres and Kopell, $\ZZ^1_f$ is always a one-parameter
group, naturally identified to $\R$, with $f \cong 1$. On the other hand,
$\ZZ^r_f$, $2 \le r \le \infty$, can be smaller: in \cite{Se}, Sergeraert
constructed an $f$ whose $\Cinf$ centralizer reduces to the infinite cyclic
group generated by $f$ (\emph{i.e} $\ZZ^\infty_f \cong \Z$). In \cite{Ey1}, we
adapted Sergeraert's construction to obtain an $f$ whose $\CC^r$ centralizer,
for all $2 \le r \le \infty$, contains a Cantor set $K$ but is still strictly
smaller than $\ZZ^1_f \cong \R$. Here, we improve \cite{Ey1}
to construct, for any Liouville number $\alpha$, an $f$ as above such
that, in addition, $\alpha \in K \subset \ZZ^r_f$.
\end{abstract}

\newpage

We want to understand what the $\CC^r$ centralizer, $2 \le r \le \infty$, of a
smooth ($\Cinf$) diffeomorphism $f$ of $\R_+ = [0,\infty)$ can possibly look like. If
$\DD^r$ denotes the group of $\CC^r$ diffeomorphisms of $\R_+$, $1 \le r \le
\infty$, endowed with the usual $\CC^r$ (compact-open) topology, the $\CC^r$
centralizer $\ZZ^r_f$ of $f$ is the (closed) subgroup of $\DD^r$ made up of all
diffeomorphisms commuting with $f$. Here, we limit ourselves to
diffeomorphisms $f$ which fix only the origin. The $\CC^1$ centralizer of such
an $f$ is very well understood: well-known theorems by G. Szekeres and N. Kopell
\cite{Sz,Ko} show that $\ZZ^1_f$ is always a one-parameter subgroup of $\DD^1$
(see also \cite[chap. 4]{Yo} and \cite[chap. 4]{Na} for complete proofs and more
discussion). More precisely, $f$ is the time-$1$ map of a unique $\CC^1$ vector
field $\nu_f$ on $\R_+$ (we call it the \emph{Szekeres vector field of $f$}),
and $\ZZ^1_f$ reduces to the flow of $\nu_f$. Hence, there is a natural
identification of $\ZZ^1_f$ to $\R$, with $f \cong 1$. Since $\ZZ_f^r$ decreases
with $r$ and contains the infinite cyclic subgroup generated by $f$, one has
\begin{equation*}
\Z \cong \{ f^n,\ n \in \Z \} \quad \subset \quad \ZZ^r_f \quad \subset \quad 
\ZZ^1_f  \cong \R. 
\end{equation*}
If $\nu_f$ is of class $\CC^r$, the inclusion on the right is an equality.
According to F. Takens \cite{Ta}, this is always the case if $f$ is not
infinitely tangent to the identity at $0$. However, this inclusion can also be
strict, as Sergeraert shows in \cite{Se}, and one can actually check~\cite{Ey2}
that in his example, $\ZZ^2_f = \ZZ^\infty_f$ reduces to the group spanned
by~$f$, and is hence as small as possible. It is then easy, for any integer
$q \ge 1$, to find an $f$ whose $\CC^\infty$ centralizer, seen as a subgroup of
$\R$, is $\frac 1 q \Z$. The next natural question then is whether $\ZZ^\infty_f$
can be a dense (but still proper) subgroup of $\ZZ_f^1 \cong \R$. Article
\cite{Ey1} gives a positive answer: $\ZZ^\infty_f$ can contain a Cantor set $K$.

In the construction of \cite{Ey1}, based on Sergeraert's techniques and
Anosov--Katok-like methods (introduced in \cite{AK}; see also \cite{FK} and the
references therein), the very good approximation of all elements of $K $ by
rational numbers plays a crucial role. This fact urges us to consider
$\ZZ^\infty_f$ not merely from a topological point of view, but from an
arithmetic one:
\begin{center} 
\emph{What kind of irrational numbers can $\ZZ^\infty_f$ contain ?}
\end{center} 
\noindent Here, it seems natural to distinguish between numbers which satisfy a
diophantine condition (\emph{i.e} are ``badly" approximated by rational numbers)
and numbers which do not. Recall that a number $\alpha$ is said to \emph{satisfy a
diophantine condition} if there exist constants $c > 0$ and $\gamma \ge 0$ such that
\begin{equation}
\lrbars{\alpha - \frac p q} \ge c q^{- 2 - \gamma}
\end{equation}
for every rational number $p/q$, with $q \ge 1$. An irrational number which
satisfies no diophantine condition is called a \emph{Liouville number}. The
following result might constitute one half of an answer to the above question.
%shows that this distinction might is a first step in the arithmetic study of
%the centralizers of diffeomorphisms of the half-line.

\begin{theorem}\label{t:principal}
For any Liouville number $\alpha$, there exists a $\CC^\infty$ diffeomorphism
$f$ of $\R_+$ with a single fixed point at the origin, whose $\CC^r$
centralizer, for all $2 \le r\le \infty$, is a proper subgroup of $\ZZ^1_f \cong
\R$ and contains a Cantor set $K \ni \alpha$.
\end{theorem}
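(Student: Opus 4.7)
The plan is to rerun the Anosov--Katok conjugation scheme of \cite{Ey1}, but with the sequence of rational ``landmarks'' chosen to converge extremely fast to the prescribed Liouville number $\alpha$. Recall that $\ZZ^1_f \cong \R$ is parameterized by the time-$t$ maps $f^t$ of the Szekeres vector field $\nu_f$, and that $f^t \in \ZZ^\infty_f$ amounts to $f^t$ being $\Cinf$. The observation is that if $t = p/q$ is rational and $f^p$ admits a smooth $q\th$ root $\phi$, then $f^{p/q} = \phi$; for an irrational $t = \alpha = \lim p_n/q_n$ one can hope to realize $f^\alpha$ as the $\Cinf$ limit of such rational roots $\phi_n$, provided the convergence is fast enough. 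Since $\alpha$ is Liouville, we have the freedom to choose approximations $p_n/q_n$ with $\bars{\alpha - p_n/q_n} \le q_n^{-N}$ for any prescribed $N$, and this is precisely the flexibility that lets us slot $\alpha$ into the construction.

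I would build $f$ inductively as a $\Cinf$ limit of smooth diffeomorphisms $f_n$, together with auxiliary diffeomorphisms $\phi_n$ converging to $\phi_\alpha = f^\alpha$. At step $n$, I would take $f_n$ of the form $h_n \circ R_n^{q_n} \circ h_n^{-1}$ (with suitable matching far from the origin), where $R_n$ is a smooth diffeomorphism and $h_n$ is a conjugating diffeomorphism supported near $0$; the natural candidate $\phi_n := h_n \circ R_n^{p_n} \circ h_n^{-1}$ then automatically commutes with $f_n$ and realizes the time-$p_n/q_n$ map of the Szekeres vector field of $f_n$. The conjugations $h_n$ follow Sergeraert's technique as implemented in \cite{Ey1}: they concentrate the non-trivial behavior of $f_n$ in a fundamental domain shrinking rapidly to $\{0\}$, at the cost of $\CC^k$-norms of $h_n$ blowing up at a rate polynomial in $q_n$ for each fixed $k$.

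The technical crux, and the step I expect to be the main obstacle, is the quantitative compatibility between this cost and the approximation rate of $\alpha$. The difference $\phi_n - \phi_{n-1}$ involves, besides an ``inherited'' contribution from the previous step, a term proportional to $\bars{\alpha - p_n/q_n}$ amplified by powers of $\Bars{h_n}_{\CC^k}$. In the diophantine case, the bound $\bars{\alpha - p_n/q_n} \ge c q_n^{-2-\gamma}$ would eventually be swamped by the amplification; the Liouville hypothesis, by contrast, lets us pick $p_n/q_n$ with $\bars{\alpha - p_n/q_n}$ beating any fixed polynomial rate in $q_n$. Carefully coupling the choice of $p_n/q_n$ to the $\CC^k$-norms of the $h_n$ dictated by the geometry near $0$, and verifying that all the convergence estimates of \cite{Ey1} close up, is where most of the adaptation work will go.

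Once convergence is secured, $f = \lim f_n$ is a $\Cinf$ diffeomorphism of $\R_+$ fixing only $0$, and $\phi_\alpha = \lim \phi_n$ is a $\Cinf$ element of $\ZZ_f$ corresponding to $\alpha$ in the identification $\ZZ^1_f \cong \R$. To upgrade $\{\alpha\}$ to a Cantor set $K \ni \alpha$, I would fatten the construction as in \cite{Ey1}: at each step, enforce smooth commutation not only with the rational $p_n/q_n$ but with a small continuous family of nearby rationals, so that the nested intersection of the corresponding parameter windows yields a Cantor set of smooth centralizer elements containing $\alpha$. The strict inclusion $\ZZ^\infty_f \subsetneq \ZZ^1_f$ is obtained by superimposing Sergeraert-type obstructions \cite{Se} which spoil the smoothness of $f^t$ for a generic $t$, exactly as in \cite{Ey1}; these are localized far enough from the $h_n$ not to interfere with the Cantor set just constructed.
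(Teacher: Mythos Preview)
Your overall strategy matches the paper's: build $f$ as a $\Cinf$ limit of conjugates $f_k^t = h_k^{-1}\circ f_0^t\circ h_k$ of a fixed initial flow, prove a polynomial bound of the form $\Bars{\nu_k\circ f_k^t}_r \le c(k,r)\,q_k^{n(k,r)}$ on the cost of each step, and then use the Liouville hypothesis to select $p_k/q_k$ with $\bars{\alpha - p_k/q_k}$ small enough to beat that bound. This is exactly what the paper does, and your identification of the ``polynomial control versus Liouville rate'' issue as the technical crux is accurate.

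There is, however, one genuine misconception in your last paragraph that would derail an attempt to carry it out as written. You propose to obtain the strict inclusion $\ZZ^\infty_f\subsetneq\ZZ^1_f$ by \emph{superimposing separate} Sergeraert-type obstructions ``localized far enough from the $h_n$ not to interfere.'' In the paper (and already in \cite{Ey1}) there is no such separation, nor could there be: the support of $g_k-\id$ extends from a point near $0$ all the way to $+\infty$, so there is no region ``far from the $h_n$'' in which to insert an independent perturbation. Instead, the \emph{same} conjugations $g_k$ play both roles. The asymmetric bump $\gamma_k$ is designed so that $g_k$ commutes with $f_0^{1/q_k}$ (hence with $f_0^1$ and $f_0^{p_k/q_k}$) outside a single fundamental domain, while for times $t$ lying in a certain arithmetically defined set $H$ (a nested intersection of unions of intervals of the form $[\frac{2p+1}{2q_k}-\frac{1}{4q_k},\frac{2p+1}{2q_k}+\frac{1}{4q_k}]$), the same $g_k$ produces a $\CC^2$-large distortion in $f_k^t$. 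The non-$\CC^2$ times and the smooth Cantor set are two faces of one construction, not two constructions glued together.

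A smaller point: the Cantor set comes for free, without the ``continuous family of nearby rationals'' you suggest. One simply imposes, alongside the Liouville condition $(C_k)$, a growth condition $(C_k')$ ensuring $1/q_{k+1}$ is small enough that every window $\bigl[\tfrac{p}{q_k}-\eps_k,\tfrac{p}{q_k}+\eps_k\bigr]$ meets $\tfrac{1}{q_{k+1}}\Z$ in at least two points; the nested intersection of these windows is then automatically a Cantor set containing~$\alpha$.
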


The aim of this article is to prove the following equivalent statement.
\begin{theorem'}\label{t:principalb}
For any Liouville number $\alpha$, there exists a $\CC^1$ vector field $\nu$ on
$\R_+$ vanishing only at $0$ whose time-$t$ map is smooth for every
$t \in \{1\} \cup K$, for some Cantor set $K$ containing $\alpha$, but not $\CC^2$
for some other $t \in \R$.
\end{theorem'}

Half of the question remains open: one would now like to prove that a $\CC^1$
vector field on $\R_+$ whose time-$1$ and $\alpha$ maps are smooth, for some
$\alpha$ satisfying a diophantine condition, is necessarily smooth itself,
drawing one's inspiration from similar problems in the case of circle
diffeomorphisms. This parallel suggests many more questions: can the set of
smooth times be dense but countable? Is there some particular arithmetic
relation between two irrational smooth times of a nonsmooth flow?... \medskip

\noindent \textbf{Acknowledgements.} I wish to thank Jean-Christophe Yoccoz and
Bassam Fayad for sharing their insight on the subject with me and encouraging me
to investigate the particular question which motivates this article. I am also
very greatful to Sylvain Crovisier who suggested (and helped me understand) the
dynamical techniques at stake here. However, none of this would have ever
materialized without Emmanuel Giroux's precious advice and endless will
to understand things better, which had a tremendous (though partly delayed)
effect on my own grasp of the subject. I deeply thank him for this gift.

The above exchanges were made easier by the financial support of the Agence
Nationale de la Recherche through the ``Symplexe" project. The article itself
was written during a one year stay in Tokyo where I was kindly invited by Pr.
Takashi Tsuboi, with the financial support of the Japan Society for the
Promotion of Science.

\section{Overview}
\label{s:overview}

The general idea of the construction is the same as in \cite{Ey1}. We repeat it
here for completeness' sake as well as to emphasize the slight (but key)
improvements, gathered at the end of the section. All statements will be
made precise and proved afterwards, in Sections \ref{s:manu} to \ref{s:conv}.

\subsection{Sergeraert's construction}

We first need to explain how to build a $\CC^1$ vector field whose flow is
smooth for some times but not $\CC^2$ for others. To that end, we sketch
Sergeraert's construction (with some minor modifications). Sergeraert starts
with a diffeomorphism $f_0$ which is the time-$1$ map of a ``well-chosen" smooth
vector field $\nu_0$ on $\R_+$ (described later). He subjects it to infinitely
many ``small" (explicit) perturbations, with disjoint supports, closer and
closer to $0$, denoted by $\gamma_k$, $k \in \N^* = \N \setminus \{0\}$, so that
$$f = f_0 + \sum_{k\ge 1} \gamma_k$$
is still a smooth diffeomorphism of $\R_+$ (to ensure this, he only needs to
pick the $\gamma_k$'s so that their sum converges in $\Cinf$ topology and is
$\CC^1$-small compared to $f_0$), but that its Szekeres vector field, on the
other hand, is not smooth anymore. More precisely, he makes sure that the
time-$1/2$ map of the resulting vector field is not $\CC^2$.

It is not straightforward, even when one knows their expressions, to visualize
the effect of the perturbations $\gamma_k$ on the Szekeres vector field of $f_0$
and on its time-$1/2$ map. A way to understand how things work is to interprete
Sergeraert's construction in terms of deformation by conjugation. Let us
therefore describe the construction all over again, in a different language.

We start with the same smooth vector field $\nu_0$ (Sergeraert's, described
below) and this time, we are going to obtain the desired vector field $\nu$ (the
one with a smooth time-$1$ map and a non $\CC^2$ time-$1/2$ map) as a limit of a
sequence of deformations $\nu_k$, each $\nu_k$ being the pull-back $h_k^*\nu_0$
of $\nu_0$ by a smooth diffeomorphism $h_k$ of $\R_+$. The flow $f_k^t$ of
$\nu_k$ is then related to the flow $f_0^t$ of $\nu_0$ by $f_k^t = h_k^{-1}
\circ f_0^t \circ h_k$. The point is to cook up the conjugations $h_k$ so that
$f_k^1$ converges in $\Cinf$ topology while $f_k^{1/2}$ converges only in
$\CC^1$ topology (in particular, the $h_k$ must diverge in $\CC^2$ topology).

Here, the behaviour of the initial vector field plays a crucial role: it
vanishes only at $0$, is negative elsewhere, and its graph resembles an undersea
landscape consisting of a sequence of alternating lowlands $L_n$ and highlands
$H_n$, accumulating at the origin, whose respective altitudes $-v_n$ and $-u_n$
(measured from the water surface, so that $0 < u_n < v_n$) go to zero very fast
when $n$ grows (so that $\nu_0$ is infinitly flat at $0$), but ``oscillate
wildly'' in the sense that the ratios $ v_n / u_n $ (and actually $v_n^k / u_n$ for 
all $k$) tend to infinity.
\begin{figure}[htbp]
\centering
\includegraphics[width=12cm,height=1.2cm]{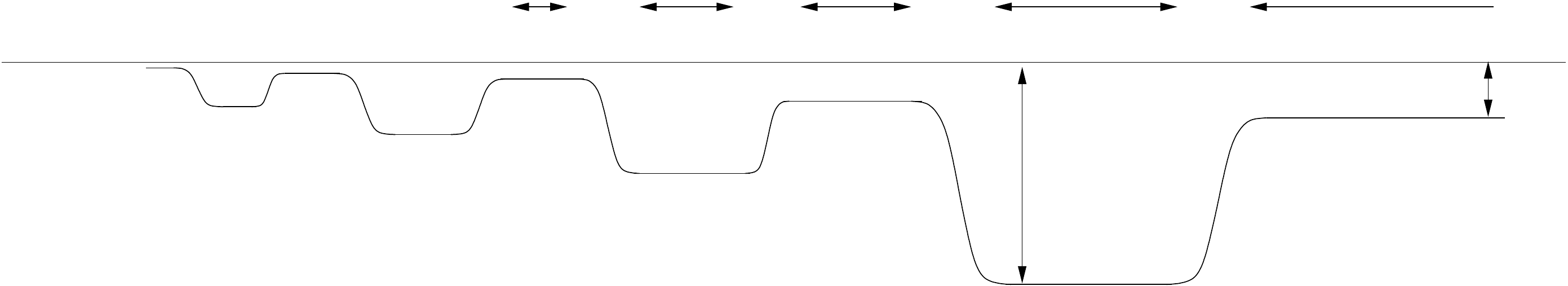}
\put(-118,14){$\scriptstyle{v_n}$}
\put(-15,21){$\scriptstyle{u_n}$}
\put(-74,10){$\scriptstyle{\nu_0}$}
\put(-48,37){$\scriptstyle{H_{n}}$}
\put(-110,37){$\scriptstyle{L_n}$}
\put(-165,37){$\scriptstyle{H_{n+1}}$}
\put(-201,37){$\scriptstyle{L_{n+1}}$}
\put(-234,37){$\scriptstyle{H_{n+2}}$}
\label{fig:undersea}
%\caption{Allure de $\nu_0$.}
\end{figure}
A consequence of this behaviour is that, if an element $f_0^t$ of the flow takes
a segment $S \subset L_n$ (resp. $S \subset H_n$) into $L_n$, then its
restriction to $S$ is the translation $x \mapsto x - tv_n$ (resp. an affine map
with big dilation factor $ v_n / u_n $). This follows immediatly from the
invariance of $\nu_0$ under its flow: $\nu_0 \circ f_0^t = \nu_0 \times Df_0^t$.

In the light of these remarks, we can move on to the definition of the
conjugations $h_k$. What we actually construct for each $k$ is a diffeomorphism
$g_k$, and we then define $h_k$ as $g_k \circ h_{k-1}$. Hence $\nu_k =
h_k^*\nu_0 = h_{k-1}^*g_k^*\nu_0$, so that the flows of $\nu_k$ and $\nu_{k-1}$
are given by
\begin{align*}
f_k^t = h_{k-1}^{-1} \circ (g_k^{-1} & \circ f_0^t \circ g_k) \circ h_{k-1}
\quad \text{and} \\
f_{k-1}^t = h_{k-1}^{-1} & \circ f_0^t \circ h_{k-1}
% \quad \text{respectively}.
\end{align*}  
respectively. Thus, intuitively, we want $g_k^{-1} \circ f_0^1 \circ g_k -
f_0^1$ to be $\CC^k$-small (say less than $2^{-k}$) while $g_k^{-1} \circ
f_0^{1/2} \circ g_k - f_0^{1/2}$ is $\CC^2$-big. To do that, we chose a $g_k$
which
\begin{itemize}
\item commutes with $f_0^1$ everywhere except in a small region: a fondamental
interval $S_k$ of $f_0^1$ lying ``in the middle of $L_k$";
\item is $\CC^k$ close to the identity in this region.
\end{itemize}
More precisely, we take $g_k$ equal to the identity near $0$ and of the form
$\id + \gamma_k$ on $S_k$, where $\gamma_k$ is a $\CC^k$ small function
supported in $S_k$, of the form:
\begin{figure}[h!]
\centering
\includegraphics[width=6cm]{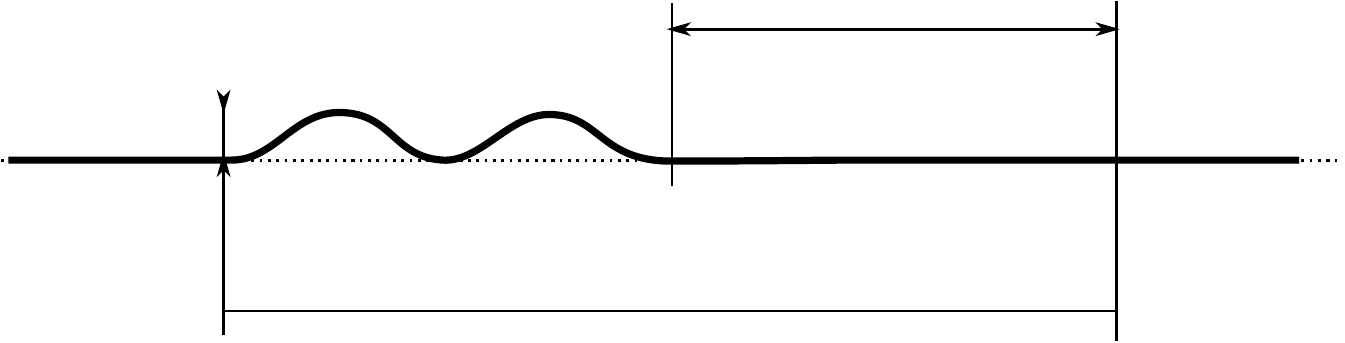}
\put(-90,-5){$S_k$}
\put(-130,36){$\gamma_k$}
\put(-155,26){$u_k$}
\put(-65,45){$v_k/2$}
\label{fig:gammak}
\end{figure}

\noindent(we will see shortly why this form in particular).
One easilly checks that this choice of $g_k$ gives:
$$f_k^1 = f_{k-1}^1 + \gamma_k.$$ 
(this construction is thus really equivalent to Sergeraert's). The support of
$g_k - \id$, on the other hand, is \emph{not} $S_k$. Indeed, the above
information is enough to determine $g_k$ on all of $\R_+$: $g_k$ is the identity
on $[0, \min S_k]$, but $[\max S_k, + \infty)$ is tiled by segments $S_k^p =
f_0^{- p/{q_k}} (S_k)$, $p \ge 1$, on which
\begin{equation*}
g_k \res{S_k^p} \quad = \quad f_0^{-p} \circ (g_k\res{S_k}) \circ f_0^p \quad =
\quad f_0^{-p} \circ (\id + \gamma_k) \circ f_0^p.
\end{equation*}
On $[\sup S_k,\sup L_k]$ in particular, $f_0^1$ coincides with the translation
by $-v_k$, so $g_k$ commutes with this translation.
\begin{figure}[h!]
\centering
\includegraphics[width=6cm]{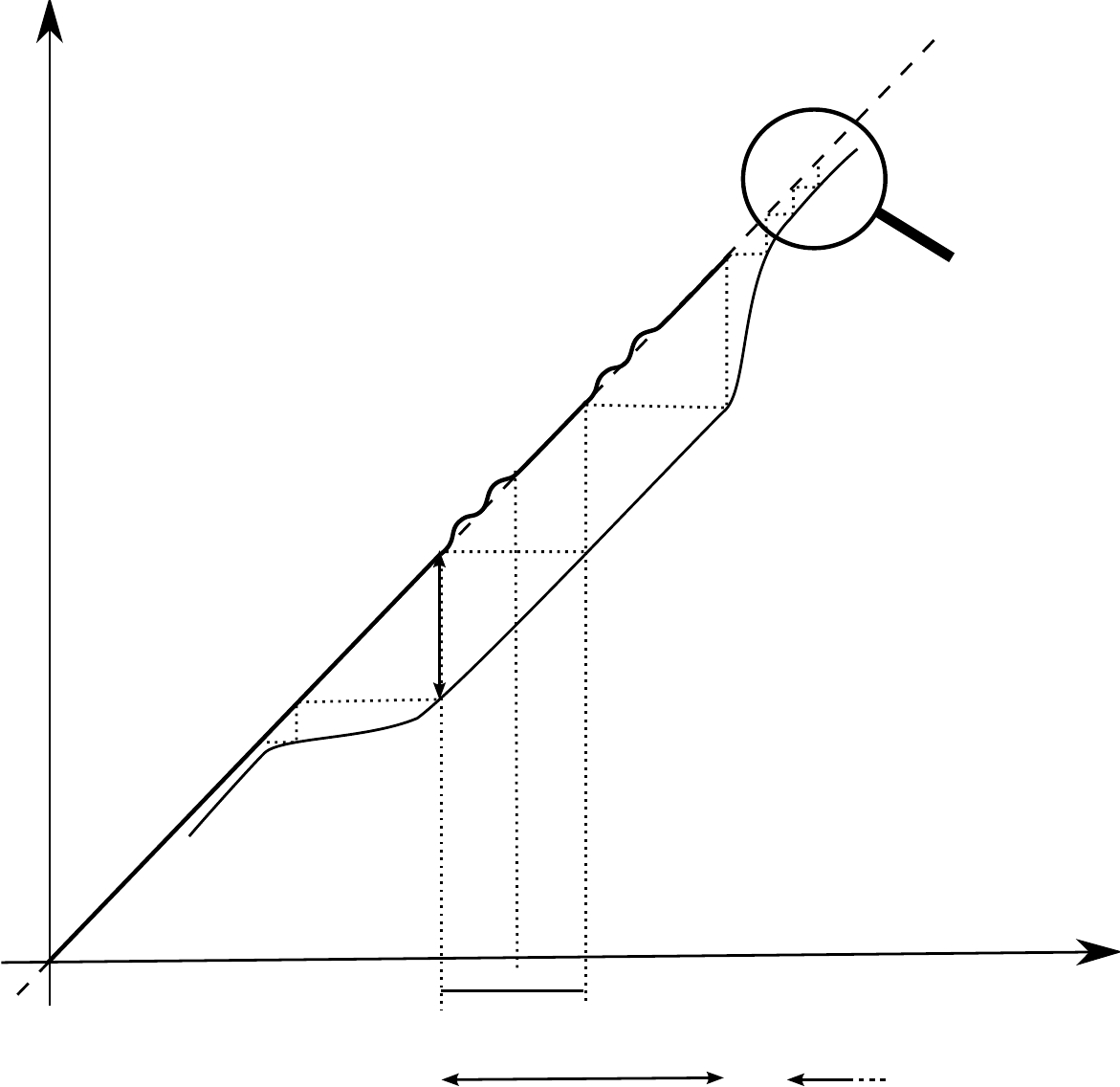}
\put(-95,5){$S_k$}
\put(-85,-9){$L_k$}
\put(-45,-9){$H_k$}
\put(-95,108){$g_k$}
\put(-60,95){$f_0^1$}
\put(-115,62){$v_k$}
\label{fig:gk}
%\caption{Allure de $g_k$ sur $L_k$.}
\end{figure}

If $S_k^p \subset H_k$ on the other hand, the restriction of $f_0^p$ to $S_k^p$
is an affine map of the form 
$$ x \in S_k^p \mapsto \frac{ v_k }{ u_k }\; x + c_k, $$
where $c_k$ is a real constant. Hence, $g_k \res{S_k^p }$ is conjugate to $g_k
\res{S_k }$ by an affine map of huge ratio, precisely cooked up to make $g_k
\res{S_k^p }$ $\CC^2$ big ($g_k$ converges towards the identity in $\CC^1$
topology, though).
\begin{figure}[htbp]
\centering
\includegraphics[width=6cm]{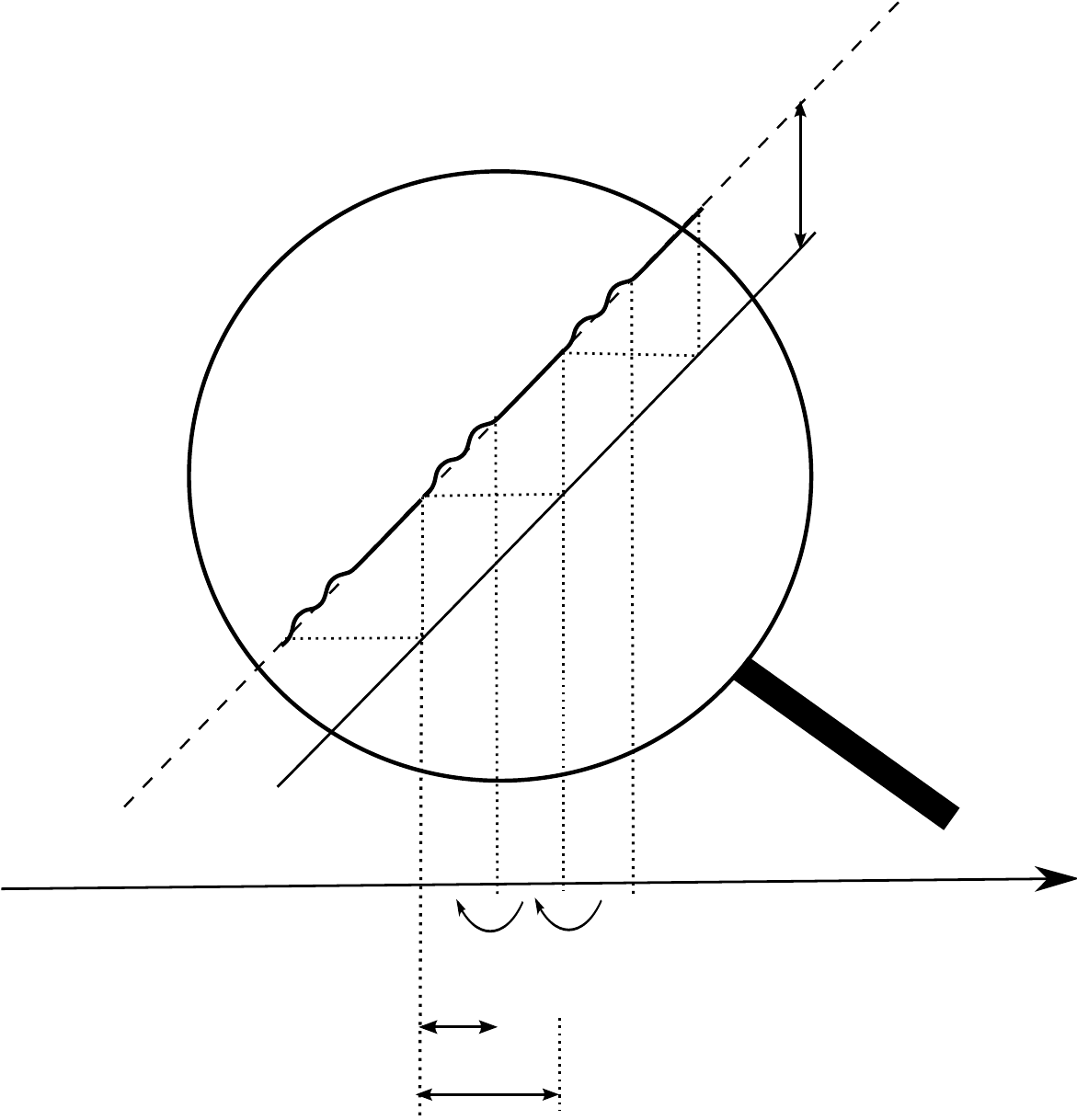}
\put(-102,6){$\scriptstyle{J_k^p}$}
\put(-97,-5){$\scriptstyle{S_k^p}$}
\put(-79,23){$f_0^{1/2}$}
\put(-42,150){$u_k$}
\put(-42,130){$f_0^1$}
\put(-110,105){$g_k$}
\label{fig:gkzoom2}
%\caption{Allure de $g_k$ sur $H_k$.}
\end{figure}

The disymetric behaviour of $\gamma_k$ had a purpose as well: on one half of the
segment $S_k^p$, one can check that $g_k^{-1} \circ f_0^{1/2} \circ g_k -
f_0^{1/2}$ is exactly $g_k - \id$, and hence $\CC^2$ big. Superimposing all
these perturbations (\emph{i.e} conjugating by $h_k = g_k\circ ... \circ g_1$
and taking the $\CC^1$ limit) has the desired effect on the time-$1/2$ map of
the limit vector field.

\subsection{Combination with Anosov--Katok-type methods}

Now let $\alpha$ be an irrational number. We want to modify the above
construction so that in the end, both $1$ and $\alpha$ are smooth times of the
limit vector field. The idea is to pick an approximation of $\alpha$ by rational
numbers $p_k/q_k$, $k\ge 1$, to take an initial vector field $\nu_0$ similar to
Sergeraert's, and, this time, to ask $g_k$ to commute almost everywhere not with
$f_0^1$ anymore, but with $f_0^{1/q_k}$ (and thus with both $f_0^{p_k/q_k}$ and
$f_0^{q_k/q_k} = f_0^1$). More precisely, $g_k$ is still the identity near $0$,
but this time, it is of the form $\id + \gamma_k$ on a fondamental interval of
$f_0^{1/q_k}$ lying in $L_k$ (and thus of length $v_k/q_k$). Again, $\gamma_k$
must be chosen $\CC^k$ small.
\begin{figure}[h!]
\centering
\includegraphics[width=6cm]{gammak.pdf}
\put(-90,-5){$v_k/q_k$}
\put(-130,36){$\gamma_k$}
\put(-155,25){$u_k$}
\put(-65,45){$v_k/2q_k$}
\label{fig:gammak}
\end{figure}
In particular, $u_k$ must be a $o(v_k^k/q_k^k)$. That way, one can make sure,
say, that
$$ \norm{f_k^{t} - f_{k-1}^{t}}_k = \norm{ g_k^{-1} \circ f_0^{t} \circ g_k -
f_0^t}_k = \norm{\gamma_k}_k< 2^{-k-1} \quad \text{for } t=p_k/q_k \;
\text{and}\; 1 $$
(both equalities are direct consequences of the construction). Now if $|\alpha -
p_k/q_k|$ is small enough (roughly speaking, $|\alpha - p_k/q_k| =
o(\norm{\nu_l}_k^{-1})$ for $l=k$ and $k-1$, assuming these ``norms'' are
finite), the above bounds remain true for $t=\alpha$ (replacing $2^{-k-1}$ by
$2^{-k}$, say), which ensures the regularity of the limit time-$\alpha$ map. But
based on the previous paragraph, the more $u_k = o(1/q_k^k)$ is small, the more
$\norm{g_k}_k$, $\norm{h_k}_k$ and thus $\norm{\nu_k}_k$ are big. So, basically,
in order for the process to converge, $|\alpha - p_k/q_k|$ must be much smaller
than $1/q_k^k$, and hence $\alpha$ must be a Liouville number.

In \cite{Ey1}, we proved the existence of some well-chosen $\alpha$ and $q_k$
for which the process indeed converges. The main contribution of this article is
to make all the ``rough" estimations above precise, \emph{i.e} to control the
size of the perturbations in terms of the initial data $q_k$, and to deduce from
it that \emph{any} Liouville number $\alpha$ has a suitable approximation by
rational numbers for which the process converges and provides the desired vector
field $\nu$.

\section{Notations and toolbox} \label{s:toolbox}

For any $\CC^k$ map $g$ on $\R_+$ we set
\begin{equation*}
   \Bars{ g }_k
= \sup \left\{ \bigbars{ D^mg (x) },\ 0 \le m \le k, \ x \in \R_+ \right\} \in [0,+\infty]. 
\end{equation*}
For any $g\in \DD^2$, we
define $Lf$ by 
\begin{equation*}
   Lf = D \log Df = \frac{ D^2 f }{ Df } .
\end{equation*}
The non-linear differential operator $L$ satisfies the following chain rule:
\begin{equation*} 
   L (h \circ g) = Lh \circ g \cdot Dg + Lg . 
\end{equation*}
To compute or control derivatives of products and compositions, we will also use
Leibniz rule:
$$D^k(gh) = \sum_{l=0}^{k} \binom{k}{l} D^lh \;D^{k-l}g$$
and Faà di Bruno's formula in the form
\begin{equation*} 
   D^k (h \circ g) = \sum_{\pi \in \Pi_k} 
   \left(D^{|\pi|} h\right) \circ g \cdot \prod_{B \in \pi} D^{|B|} g,
\end{equation*}
where $\Pi_k$ is the set of all partitions $\pi$ of $\{ 1, \cdots, k \}$ and 
$|X|$, for any finite set $X$, is the number of its elements. 

Finally, let $\eta$ be a vector field on $\R_+$. Throughout the paper, we will
make no difference between $\eta$ and the function $\eta / \partial_x$, where
$x$ is the underlying coordinate in $\R_+$, and in particular we will identify
$\partial_x$ with $1$. For $g\in\DD^1$, we denote by $g^*\eta$ the 
pullback of $\eta$ by $g$ which, viewed as a function, has the following
expression:
\begin{equation*}
 g^*\eta = \frac{ \eta \circ g }{ Dg }.
\end{equation*}

\section{A machine for turning rational approximations into vector fields}
\label{s:manu}

What we actually describe in this section is a ``manufacturing process" which,
to any increasing sequence of positive integers $(q_k)_{k \ge 1}$, associates a
specific $\CC^1$ vector field $\nu$ on $\R_+$, with a smooth time-$1$ map. Then
(in the next sections), we show that any Liouville number $\alpha$ has a
suitable approximation by rational numbers $(p_k/q_k)_{k \ge 1}$ such that the
vector field $\nu$ associated to the $q_k$'s has all the additional properties
listed in Theorem \hyperref[t:principalb]{A'}
.

Let $(q_k)_{k \ge 1}$ be any increasing sequence of positive integers (fixed
until the end of Section \ref{s:manu}). In order to produce $\nu$, we must first
associate to $(q_k)_{k \ge 1}$ a number of intermediate objects, the main of
which being an initial vector field $\nu_0$, smooth on $\R_+$, and a sequence
$(g_k)_{k \ge1}$ of smooth diffeomorphisms of $\R_+$. Those are used to deform
$\nu_0$ gradually into new smooth vector fields
$$\nu_k = h_k^*\nu_0 \quad \text{where $h_k = g_k\circ ... \circ g_1$},$$
which converge in $\CC^1$ topology, and we define $\nu$ as their limit. 
%The construction is made in such a way that, without any further condition 
%on $(q_k)_{k\ge 1}$, the time-$1$ map of this limit vector field is smooth. 

\subsection{Common basis}

Some material used to construct $\nu_0$ is common to every sequence $(q_k)_{k\ge 1}$,
namely the coefficients $(v_n)_{n\ge 1}$ defined by
$$v_n = 2^{-(n+3)^2} \quad\text{for all $n \ge 1$,}$$
and three smooth functions $\alpha, \beta, \gamma \from \R \to [0,1]$ 
satisfying the following conditions: 
\begin{itemize}
\item
$\alpha$ vanishes on $\left( -\infty, \frac 1 8 \right]$, equals $1$ on $\left[
\frac 1 4 , +\infty \right)$, and $\norm{\alpha}_1 < 16$;
\item
$\beta$ vanishes outside $\left[ \frac 1 8, \frac 7 8 \right]$, equals $1$ on 
$\left[ \frac 1 4, \frac 3 4 \right]$, and $\norm{\beta}_1 < 16$;
\item $\gamma$ vanishes outside $\left[ \frac 1 4 , \frac 3 4 \right]$, 
$\gamma(x) = x^2/2$ if $|x| \le 1/20$, and $\norm{\gamma}_1 < 1$.
\end{itemize}

\begin{figure}[htbp]
\centering
\includegraphics[width=12cm]{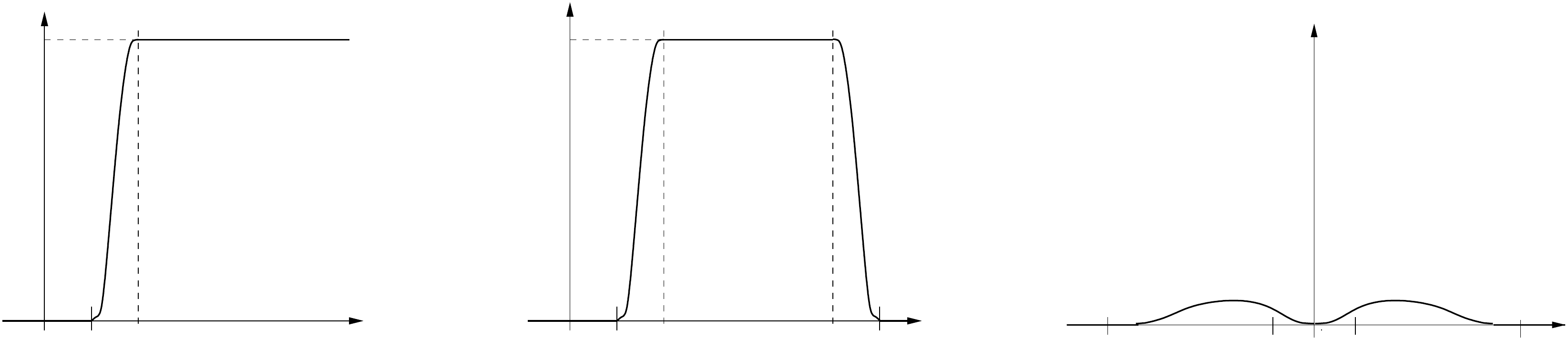}
\put(-13,-7){${\scriptstyle \frac 14}$}
\put(-110,-7){${\scriptstyle -\frac 14}$}
\put(-50,-7){${\scriptstyle \frac 1{20}}$}
\put(-74,-7){${\scriptstyle - \frac 1{20}}$}
\put(-80,20){${\scriptstyle \gamma}$}
\put(-200,-7){${\scriptstyle \frac 14}$}
\put(-162,-7){${\scriptstyle \frac 34}$}
\put(-210,-7){${\scriptstyle \frac 18}$}
\put(-212,20){${\scriptstyle \beta}$}
\put(-151,-7){${\scriptstyle \frac 78}$}
\put(-313,-7){${\scriptstyle \frac 14}$}
\put(-324,-7){${\scriptstyle \frac 18}$}
\put(-328,20){${\scriptstyle \alpha}$}
\label{fig:abg}
\end{figure}

\subsection{Initial vector field and related objects}

The coefficients $(u_n)_{n\ge1}$ defined now on the other hand, depend on
$(q_k)_k$:
\begin{equation}\label{e:uk}
u_n = 2^{-n-4} \; q_n^{-n} \; v_n^{n} \norm{\gamma}_{n}^{-1} \quad
\text{for all $n\ge 1$}.
\end{equation}
The initial vector field $\nu_0$ is then defined by:
\begin{gather}\label{e:nu0}
   \nu_0(x) = - u_{n+1} - (u_n - u_{n+1})\; \alpha (2^{n+1} x - 1)
 - (v_n - u_n)\; \beta (2^{n+1} x - 1) \\ 
   \text{for $x \in [2^{-n-1}, 2^{-n}]$, $n\ge 1$}, \quad 
   \nu_0(0) = 0 \quad \text{and} \quad
   \nu_0(x) = -u_1 \quad \text{for $x \ge 1/2$.} 
\end{gather}\vspace{-0.5cm}

\begin{figure}[htbp]
\centering
\includegraphics[width=10cm]{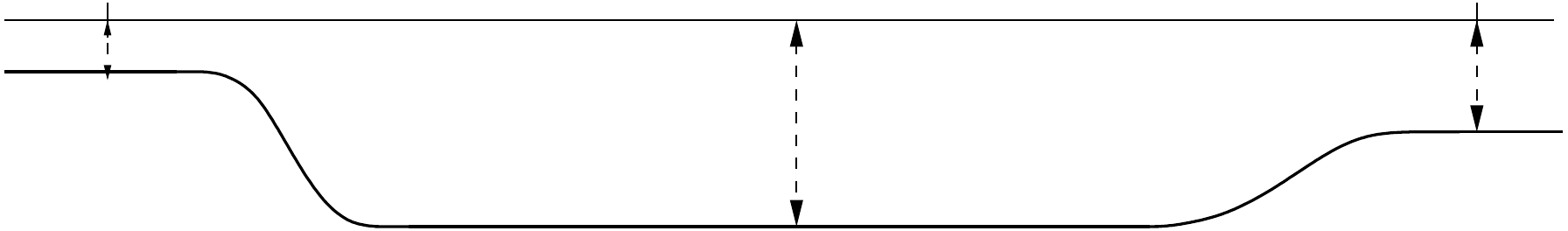}
\put(-285,32){${\scriptstyle u_{n+1}}$}
\put(-270,45){${\scriptstyle 2^{-n-1}}$}
\put(-152,18){${\scriptstyle v_{n}}$}
\put(-20,45){${\scriptstyle 2^n}$}
\put(-12,26){${\scriptstyle u_{n}}$}
\label{fig:nu_0}
\end{figure}

\noindent One easily checks that $\nu_0$ is smooth, infinitely flat at the 
origin and $\CC^1$-bounded ---~with $0 < \Bars{\nu_0}_1 < 1$. Furthermore,
$\nu_0$ equals $-v_n$ identically on the central part of $[2^{-n-1}, 2^{-n}]$, 
namely $[2^{-n-1} + 2^{-n-3}, 2^{-n} - 2^{-n-3}]$, and $-u_n$ on $[2^{-n} - 
2^{-n-4}, 2^{-n} + 2^{-n-3}]$. 

We denote by $\{ f_0^t, t \in \R \}$ the flow of $\nu_0$, and fix a forward
orbit $\{ a_l, l \ge 0 \}$ of $f_0 = f_0^1$, where $a_0 = 1$ and $a_l =
f_0(a_{l-1}) $ for all $l \ge 1$. A simple computation of travel time at
constant speed shows that for every $n \ge 1$, there exist integers $i$ and $j$
such that
\begin{align} \label{e:ain}
2^{-n} - {2^{-n-4}} & \le a_{i+2} < a_{i-1} 
\le 2^{-n} + {2^{-n-3}} \\
\llap{\text{and} \quad}
2^{-n-1} + 2^{-n-3} & \le a_{j+2} < a_{j-1} \le
2^{-n} - {2^{-n-3}} . \label{e:ajn}
\end{align}
We denote by $i(n)$ (resp. $j(n)$) the smallest integer $i$ (resp. $j$)
satisfying \eqref{e:ain} (resp. \eqref{e:ajn}). Thus $\nu_0$ equals $-v_n$ on
$[a_{ j(n)+2 }, a_{ j(n)-1 }]$, and hence $f_0^t$ induces on $[a_{ j(n)+1
},\linebreak[1] a_{ j(n)-1 }]$ the translation by $-tv_n$ for $0 \le t \le 1$.
Similarly, $f_0^t$ induces the translation by $-t u_n$ in a neighbourhood of
$a_{ i(n) }$.

\subsection{Conjugating diffeomorphisms and their properties}

For all $k\ge 1$, we define $\gamma_k : \R_+ \to [0,1]$ by:
\begin{equation} \label{e:gamma}
\gamma_k (x) = u_k \gamma \Bigl( \frac{q_k}{v_k} \bigl( x - a_{j(k)} \bigr)
\Bigr) \qquad \text{for all $x \in \R_+$.} 
\end{equation}
The map $\gamma_k$ is supported in $S_k = \left[ a_{j(k)} - \frac{v_k}{4q_k},
a_{j(k)} + \frac{v_k}{4q_k} \right]$, which is a fundamental interval of
$f_0^{1/2q_k}$ since it lies inside $[a_{j(k)+1}, a_{j(k)-1}]$ where the flow
$f_0^s$ of $\nu_0$ at time $0 \le s \le 1$ coincides with the translation by $-s
v_k$. Furthermore, for all $x \in \R_+$ and all $m \in \N$
\begin{align*} 
D^m\gamma_k (x) & = u_k\left(\frac{ q_k}{v_k}\right)^m 
D^m \gamma \Bigl( \frac{q_k}{v_k} \bigl( x - a_{j(k)} \bigr) \Bigr)\\
&= 2^{-k-4} \left( \frac{ q_k}{v_k} \right)^{m - k} \norm{\gamma}_k^{-1} 
D^m \gamma \Bigl( \frac{q_k}{v_k} \bigl( x - a_{j(k)} \bigr)\Bigr)
\end{align*}
by definition \eqref{e:uk} of $u_k$. In particular, 
\begin{equation}\label{e:gammak}
\Bars{ \gamma_k }_k = 2^{-k-4}.
\end{equation}
Now let $J_k$ denote the fundamental interval $\left[ a_{j(k)} -
\frac{v_k}{4q_k}, a_{j(k)} + \frac{3v_k}{4q_k} \right]$ of $f_0^{1/q_k}$. We
define $ g_k \from \R_+ \to \R_+$ as the unique map satisfying:
\begin{itemize}
\item
$g_k = \id$ on $\left[0 ,  a_{j(k)} - \frac{v_k}{4q_k}\right]$; 
\item
$g_k = \id + \gamma_k$ on $J_k$;
\item
$g_k$ commutes with $f_0^{1/q_k}$ outside $J_k$, so that
\begin{equation}
\label{e:gqn}
g_k = f_0^{-p/q_k} \circ (\id + \gamma_k) \circ f_0^{p/q} \quad \text{
on } f_0^{-p/q_k} \left( J_k \right) \; \text{for all } p \ge 0.
\end{equation}
\end{itemize}
In particular, all segments $f_0^{-p/q_k} \left( J_k \right), \; p \in \Z$, are
stable under $g_k\; (9')$. We now list some key properties of $g_k$.\\

For all $0 \le p \le q_k$, $f_0^{-p/q_k}$ and $f_0^{p/q_k}$ coincide with the
translations by $ \frac{p}{q_k} v_k$ and $- \frac{p}{q_k} v_k$ on $J_k$ and
$f_0^{-p/q_k}(J_k)$ respectively, so that \eqref{e:gqn} becomes:
\begin{equation}
\label{e:gqnb}
g_k = \id + \gamma_k\circ \left( \id - \frac{p}{q_k} v_k \right)\quad
\text{ on } f_0^{-p/q_k} \left( J_k \right), \; 0 \le p \le q_k.
\end{equation}
In particular, $g_k$ is the identity on
\begin{equation}\label{e:Nqn}
N_k = \bigcup_{p=0}^{q_k-1} \left( a_{j(k)} + (2p+1) \frac{v_k}{2q_k} +
\left[ - \frac{v_k}{4q_k},\frac{v_k}{4q_k} \right] \right), 
\end{equation}
and \emph{a fortiori} on every $f_0^{-p/q_k}(N_k)$, $p \ge 0$. This is also true
for $p<0$ since $g_k$ is the identity on $[0, a_{j(k)} - v_k/4q_k]$.

\begin{figure}[htbp]
\centering
\includegraphics[width=11cm]{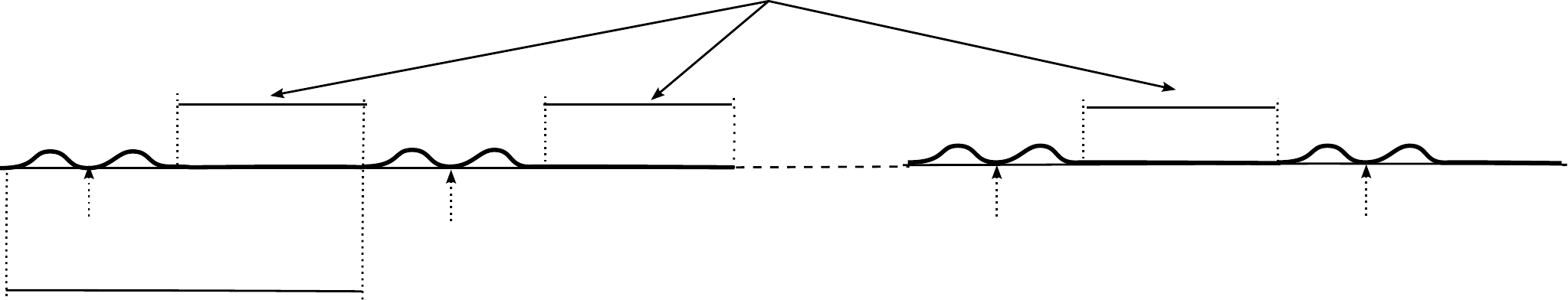}
\put(-160,64){${\scriptstyle N_k}$}
\put(-280,-8){${\scriptstyle J_k}$}
\put(-300,10){${\scriptstyle a_{j(k)}}$}
\put(-238,10){${\scriptstyle a_{j(k)}+\frac{v_k}{q_k}}$}
\put(-138,10){${\scriptstyle a_{j(k)}+(q_k-1)\frac{v_k}{q_k}}$}
\put(-50,10){${\scriptstyle a_{j(k)-1}}$}
\label{fig:Nk}
\end{figure}

Note furthermore that since $\nu_0$ is constant equal to $-u_1$ on
$[1/2,+\infty)$, $f_0^{-1/q_k}$ coincides with the translation by $u_1/q_k$ on
$[1/2,+\infty)$, so $g_k$ commutes with that translation there. \emph{A
fortiori}, $g_k$ commutes with the translation by $u_1$ on $[1,+\infty)$.
Furthermore,
\begin{align*}
g_k(a_0 = 1) = f_0^{-j(k)} & \circ g_k \circ f_0^{j(k)}(a_0) \\
& = f_0^{-j(k)}( g_k(a_{j(k)}) ) = f_0^{-j(k)}(a_{j(k)}) = a_0 =1,
\end{align*}
so $[1,+\infty)$ is stable under $g_k$.\medskip

After differentiation, \eqref{e:gqn} becomes
\begin{equation}\label{e:Dgqn1}
Dg_k = \frac{ Df_0^{p/q_k} }{ Df_0^{p/q_k} \circ g_k } \times 
\left(1 + D\gamma_k \circ f_0^{p/q_k}\right) \quad \text{on
$f_0^{-p/q_k} \left( J_k \right), \; p \ge 0$},
\end{equation}
so $g_k$ is a diffeomorphism since $\Bars{ \gamma_k }_1 < 1$, according to
\eqref{e:gammak}. One can actually simplify expression \eqref{e:Dgqn1}. The
vector field $\nu_0$ being invariant under the diffeomorphisms of its flow,
$$ Df_0^t=\frac{\nu_0 \circ f_0^t}{\nu_0}\quad \text{on $\R_+^* = (0,+\infty)$
for all $t \in \R$,}$$
so
$$ \frac{Df_0^{p/q_k}}{ Df_0^{p/q_k}\circ g_k} = 
\frac{\nu_0 \circ f_0^{p/q_k}}{\nu_0} \times 
\frac{\nu_0 \circ g_k}{\nu_0 \circ f_0^{p/q_k} \circ g_k}.$$
But for all $x \in f_0^{-p/q_k} \left( J_k \right)$, 
$$\nu_0 \circ f_0^{p/q_k}(x) = \nu_0\circ f_0^{p/q_k} \circ g_k(x) = -v_k$$
so
\begin{equation}\label{e:Dgqn2}
Dg_k = \frac{\nu_0 \circ g_k}{\nu_0} \times 
\left(1 + D\gamma_k \circ f_0^{p/q_k} \right) \quad \text{on
$f_0^{-p/q_k} \left( J_k \right), \; p \ge 0$}.
\end{equation}

%Le lemme suivant sera utile dans la preuve du lemme \ref{l:limit} pour prouver
%que le flot limite issu de notre construction n'est pas lisse au temps $1/2$ :
We now define for all $k \ge 1$ a smooth diffeomorphism $h_k = g_k \circ ...
\circ g_1$ and a smooth vector field $\nu_k = h_k^*\nu_0$. The flow
$\{f_k^t,\;t\in\R\}$ of $\nu_k$ is well defined and consists of smooth
diffeomorphisms of $\R_+$ satisfying $f_k^t = h_k^{-1}\circ f_0^t \circ h_k$.
Note that $h_k$, like $g_l$ for all $l\le k$, commutes with the translation by
$u_1$ on $[1,+\infty)$. Let us define furthermore the (possibly empty) sets
$H_{k_0}$, for all $k_0\ge 1$, and $H$ by
\begin{equation}
\label{e:Hn0}
H_{k_0} = \bigcap_{l\ge k_0} \bigcup_{0 \le p < q_l} \left[ \frac {2p+1}{2q_l} -
\frac {1}{4q_l}, \frac {2p+1}{2q_l} + \frac {1}{4q_l} \right]
\end{equation}
and 
\begin{equation}\label{e:H}
H = \bigcup_{k_0 \ge1} H_{k_0}.
\end{equation}
We will need the following lemma in the proof of Proposition \ref{p:cv} (cf. \ref{ss:conv}) to show
that for all $t\in H$, the time-$t$ map of the limit vector field $\nu$ is not
$\CC^2$.

\begin{lemma} \label{l:Phi}
Let $ t\in H_{k_0} \subset H$ for some $k_0 \ge 1$. For all $k \ge k_0$, $h_k$
has the following behaviour on the orbits $\{a_n, \, n \in \Z\}$ and
$\{b_n=f_0^{-t}(a_n), \, n \in \Z\}$ of $f_0^1$\up:
\begin{enumerate}
\item 
$h_k$ is infinitly tangent to the identity at $b_n$ for all $n \ge j(k_0)$;
\item
$h_k$ is $\CC^1$-tangent to the identity on $\{a_n, \, n \in \Z\}$ ---~i.e $h_k(a_n) =
a_n$ and $Dh_k(a_n) = 1$ for all $n \in \Z$\up;
\item 
$(Lh_k - Lh_{k-1}) (a_n)$ equals $\frac{u_k q_k^2}{ v_k |\nu_0(a_n)| }$ if $n
\le j(k)$ and $0$ otherwise.
\end{enumerate}
\end{lemma}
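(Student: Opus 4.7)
The plan is to dispatch (2) and (3) quickly from the chain rule and the nonlinear cocycle formula for $L$, and then to attack (1) separately by an inductive Fa\`a di Bruno argument whose core difficulty lies in locating $b_n$ inside the ``identity region'' of each $g_l$.

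For (2), I would induct on $k$ via $h_k = g_k \circ h_{k-1}$; the chain rule reduces the inductive step to showing $g_k(a_n) = a_n$ and $Dg_k(a_n) = 1$ for every $n \in \Z$. If $n > j(k)$, then $a_n \le a_{j(k)+1} < a_{j(k)} - v_k/(4q_k)$, so $g_k$ is the identity in a neighbourhood of $a_n$. If $n = j(k) - m$ with $m \ge 0$, then $g_k = f_0^{-m} \circ (\id + \gamma_k) \circ f_0^m$ near $a_n$; since $\gamma(x) = x^2/2$ near $0$ forces $\gamma(0) = D\gamma(0) = 0$, hence $\gamma_k(a_{j(k)}) = D\gamma_k(a_{j(k)}) = 0$, the chain rule closes the induction.

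For (3), the cocycle formula applied to $h_k = g_k \circ h_{k-1}$ gives $Lh_k - Lh_{k-1} = (Lg_k \circ h_{k-1}) \cdot Dh_{k-1}$, which by (2) reduces at $a_n$ to $Lg_k(a_n)$. For $n > j(k)$, $g_k$ is locally the identity and this vanishes. For $n = j(k) - m$ with $m \ge 0$, iterating the cocycle formula on the conjugation $g_k = f_0^{-m} \circ (\id + \gamma_k) \circ f_0^m$, and using the identity $Lf_0^{-m} \circ f_0^m + Lf_0^m / Df_0^m = 0$ (obtained by differentiating $f_0^{-m} \circ f_0^m = \id$), collapses $Lg_k(a_n)$ to $L(\id+\gamma_k)(a_{j(k)}) \cdot Df_0^m(a_n)$. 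The first factor equals $D^2 \gamma_k(a_{j(k)})$, computed from the explicit quadratic form of $\gamma_k$ near $a_{j(k)}$ to be $u_k q_k^2/v_k^2$, and the second equals $v_k/\lvert \nu_0(a_n)\rvert$ by the $\nu_0$-invariance $Df_0^m = \nu_0 \circ f_0^m / \nu_0$; their product matches the claimed value.

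For (1), fix $n \ge j(k_0)$ and induct on $l \le k$, showing that $h_l$ is infinitely tangent to identity at $b_n$. Fa\`a di Bruno applied to $h_l = g_l \circ h_{l-1}$ shows that, under the inductive hypothesis, $D^m h_l(b_n) = D^m g_l(b_n)$ for every $m$, so it suffices to prove that each individual $g_l$ is infinitely tangent to identity at $b_n$. Two regimes arise. If $l < k_0$, the support of $g_l - \id$ sits in $[a_{j(l)} - v_l/(4q_l), +\infty) \subset (2^{-l-1}, +\infty)$, whereas $b_n \le a_{n-1} \le a_{j(k_0)-1} < 2^{-k_0} \le 2^{-l-1}$, so $g_l$ is the identity in a neighbourhood of $b_n$. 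If $l \ge k_0$, the hypothesis $t \in H_{k_0} \subset H_l$ yields $q_l t = P_l + s_l$ with $P_l \in \Z$ and $s_l \in [1/4, 3/4]$; combined with $a_n = f_0^{n - j(l)}(a_{j(l)})$, this rewrites $b_n = f_0^{N/q_l}(f_0^{-s_l/q_l}(a_{j(l)}))$ with $N = (n - j(l)) q_l - P_l$. The inner point $f_0^{-s_l/q_l}(a_{j(l)}) = a_{j(l)} + s_l v_l/q_l$ lies in the $p=0$ piece $[a_{j(l)} + v_l/(4q_l), a_{j(l)} + 3v_l/(4q_l)]$ of $N_l$ (the translation is legitimate since \eqref{e:ajn} keeps $a_{j(l)-1}, \ldots, a_{j(l)+2}$ inside the $v_l$-plateau), and the outer $f_0^{N/q_l}$ transports it into $f_0^{-p/q_l}(N_l)$ with $p = -N$, a set on which $g_l$ is identically $\id$. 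In the extremal case $s_l \in \{1/4, 3/4\}$, $b_n$ falls on the edge of this set, but $\gamma$ is $\Cinf$-flat at $\pm 1/4$ (by smoothness and compact support in $[-1/4, 1/4]$), so $g_l$ remains infinitely tangent to identity at $b_n$. This positional analysis in the regime $l \ge k_0$---precisely where the arithmetic condition $t \in H_{k_0}$ enters the proof---is the main delicate step.
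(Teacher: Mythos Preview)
Your proof is correct and follows essentially the same approach as the paper. For point~(1) the paper argues even more directly---observing that $b_{j(l)} = f_0^{-t}(a_{j(l)}) = a_{j(l)} + t v_l$ lies in $N_l$ straight from the definition of $H_{k_0}$, then transporting by $f_0^{n-j(l)}$---but your decomposition $q_l t = P_l + s_l$ is equivalent, and your explicit handling of the boundary case $s_l \in \{1/4,3/4\}$ is a welcome addition that the paper glosses over.
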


\begin{proof}
Let $k \ge k_0$. To prove the first point, we must check that for all $l \ge 1$
and $n \ge j(n_0)$, $g_l$ is the identity near $b_n$. For $l < k_0$, this is true
because $b_n \notin [a_{j(l)} - \frac{v_l}{4q_l},+\infty)$, which contains the
support of $g_l$. As for $l \ge k_0$, according to \eqref{e:Nqn}, we only need to
check that $b_n\in f_0^p(N_l)$ for some $p\in \N$. But
$$b_{j(l)} = f_0^{-t}(a_{j(l)}) = a_{j(l)} + t v_{l} \in N_{l}$$
by definition of $H_{k_0}$, so $b_n = f_0^{n-j(l)}(b_{j(l)}) \in
f_0^{n-j(l)}(N_{l})$ for all $n\in \Z$, which concludes the proof of the first
point.\medskip

Now $\gamma(0) = D\gamma(0) = 0$, so $\gamma_l(a_{j(l)}) = D\gamma_l(a_{j(l)}) =
0$ for all $l \ge 1$, according to \eqref{e:gamma}, and since $g_l = \id +
\gamma_l$ on $J_l$, $g_l$ is tangent to the identity at $a_{j(l)}$. This is also
true at every point $f_0^{-p/q_l}(a_{j(l)}), \, p \ge 0$, by definition \eqref{e:gqn}
of $g_l$ (in particular at every $a_n$, $n \le j(l)$), and at every $a_n$, $n >
j(l)$ since $g_l = \id$ on a neighbourhood of $[0,a_{j(l)+1}] =
[0, a_{j(l)} - v_{l}]$. This, applied to $h_k = g_k \circ ... \circ g_1$, proves the
second point.\medskip

Let us now apply the chain rule to $h_k = g_k \circ h_{k-1}$:
\begin{equation*}
Lh_k = Lg_k \circ h_{k-1} \times Dh_{k-1} + Lh_{k-1}.
\end{equation*}
For all $n \in \Z$, point 2 tells us that $h_{k-1} (a_n) = a_n$ and $Dh_{k-1} (a_n) = 1$,
 so the above equality gives
$$ (Lh_k - Lh_{k-1}) (a_n) = Lg_k(a_n). $$
For $n > j(k)$, $Lg_k (a_n) = 0$ since $g_k$ is the identity on a neighbourhood of
$\left[ 0, a_{j(k)+1} \right]$. Suppose now that $n \le j(k)$ and write 
$p = j(n_k)-n \ge 0$. According to \eqref{e:gqn}, on a neighbourhood of $a_n$,
$g_k$ is given by:
\begin{equation*}
g_k = f_0^{-p} \circ (\id + \gamma_k) \circ f_0^p.
\end{equation*}
Furthermore
\begin{equation*}
\id = f_0^{-p} \circ \id \circ f_0^p.
\end{equation*}
The chain rule formula applied to both equalities gives:
\begin{align*}
Lg_k = Lg_k - L\id = Lf_0^{-p} & \circ \left( \id + \gamma_k \right) \circ
f_0^p \times D\left( \id + \gamma_k \right) \circ f_0^p \times Df_0^p\\ & +
L(\id + \gamma_k) \circ f_0^p \times Df_0^p- Lf_0^{-p} \circ f_0^p \times
Df_0^p.
\end{align*}
At $a_n = f_0^{-p}(a_{j(k)})$, we get
\begin{align*}
Lg_k(a_n) = Lf_0^{-p} & \left( a_{j(k)} + \gamma_k(a_{j(k)}) \right) \times
(1 + D\gamma_k(a_{j(k)}) ) \times Df_0^p(a_n)\\
& - Lf_0^{-p} (a_{j(k)}) \times Df_0^p(a_n)+
L(\id + \gamma_k)(a_{j(k)}) \times Df_0^p(a_n).
\end{align*}
Since $\gamma_k(a_{j(k)}) = D\gamma_k(a_{j(k)}) = 0$, the first two terms 
cancel each other. In the end, the invariance relation $\nu_0 \circ
f_0^p = Df_0^p \times \nu_0 $ applied at $a_n$ and the definition of $\gamma_k$ 
give 
$$Lg_k(a_n) = L(\id + \gamma_k)( a_{j(k)} ) \times \frac{ \nu_0 (a_{j(k)})}
{\nu_0(a_n)} = \frac{u_k q_k^2}{v_k^2}
\times\frac{v_{k}}{|\nu_0(a_n)|}.$$
\end{proof}

\subsection{Convergence of the deformation process and properties of the limit}
\label{ss:conv}
\def\esti{\mathrm{i}}

\begin{proposition}\label{p:cv}
For all $k\ge 1$, 
\begin{equation}
\bigBars{f_k^t - f_{k-1}^t }_k \le 2^{-k-4} \quad \text{for all} \quad t \in
\frac1{q_k} \Z \cap [0,1].\tag{$\esti_k$} \label{ik}
\end{equation}
In particular, the time-$1$ maps $f_k^1$ converge in $\Cinf$ topology towards a
smooth diffeomorphism $f$ with no other fixed point than $0$, whose Szekeres
vector field $\nu$ is the $\CC^1$ limit of the vector fields $\nu_k$. On the
other hand, for all $t$ in $H$, the time-$t$ map $f^t$ of $\nu$ is not $\CC^2$.
\end{proposition}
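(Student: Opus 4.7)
I would prove the proposition in three strands, the third being the technical heart.

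First, I would establish the estimate $(\esti_k)$. Writing
\[ f_k^t - f_{k-1}^t = h_{k-1}^{-1} \circ g_k^{-1} \circ f_0^t \circ g_k \circ h_{k-1} - h_{k-1}^{-1} \circ f_0^t \circ h_{k-1}, \]
and invoking \eqref{e:gqn}--\eqref{e:gqnb}, for $t = p/q_k$ with $0 \le p \le q_k$ the difference $g_k^{-1} \circ f_0^{p/q_k} \circ g_k - f_0^{p/q_k}$ is supported on $\bigcup_{0 \le p' \le p} f_0^{-p'/q_k}(J_k)$ and coincides on each tile with a translated copy of $\gamma_k$ (since $f_0^{s/q_k}$ acts there as translation by $-sv_k/q_k$). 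This entire support lies below $a_{j(k)-1} \le 2^{-k} - 2^{-k-3}$, while each $g_l$ with $l < k$ is the identity on $[0, a_{j(l)} - v_l/(4q_l)]$ and $a_{j(k-1)} \ge 2^{-k} + 2^{-k-2}$; hence $h_{k-1}$ is the identity on a neighbourhood of the support, the outer conjugation by $h_{k-1}$ is trivial there, and $\bigBars{f_k^t - f_{k-1}^t}_k = \Bars{\gamma_k}_k = 2^{-k-4}$ by \eqref{e:gammak}.

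Next, the $C^\infty$ convergence of $f_k^1$ follows from $(\esti_k)$ at $t = 1$: for every fixed $r$ the tail $\sum_{k > r} \bigBars{f_k^1 - f_{k-1}^1}_r$ is summable, so $(f_k^1)$ is Cauchy in $C^r$ and hence converges in $C^\infty$ to a smooth $f$. To see that $f$ has only $0$ as a fixed point, I would use that $h_k(1) = 1$ (from \eqref{e:gqn} with $p = j(k) q_k$) gives $f_k^1(1) = a_1 < 1$ for every $k$, and that the commutation of each $f_k^1$ with the translation by $u_1$ on $[1,+\infty)$ passes to $f$, reducing the question to the flow/contraction structure on $(0,1)$. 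Since $\nu_k = h_k^* \nu_0$ is the Szekeres vector field of $f_k^1$ and $f_k^1 \to f$ in $C^\infty$ among smooth diffeomorphisms of $\R_+$ fixing only $0$, continuity of the Szekeres correspondence gives $\nu_k \to \nu$ in $C^1$, with $\nu$ the Szekeres field of $f$.

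For the non-$C^2$ statement, fix $t \in H_{k_0}$ and $k \ge k_0$. Applying the chain rule for $L$ to $f_k^t = h_k^{-1} \circ f_0^t \circ h_k$ at $b_n = f_0^{-t}(a_n)$ with $n \ge j(k_0)$, Lemma \ref{l:Phi}(1)--(2) collapse the resulting formula to
\[ Lf_k^t(b_n) = -Lh_k(a_n) \cdot Df_0^t(b_n) + Lf_0^t(b_n). \]
The decisive choice is $n = i(k)$: since $j(l) < i(k) \le j(k)$ for every $l < k$, only the $l = k$ term survives in the telescoping sum from Lemma \ref{l:Phi}(3), giving $Lh_k(a_{i(k)}) = u_k q_k^2/(v_k \cdot u_k) = q_k^2/v_k$. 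This diverges as $k \to \infty$ because $v_k = 2^{-(k+3)^2}$, so $\bars{Lf_k^t(b_{i(k)})} \to +\infty$ along the sequence $b_{i(k)} \to 0$. If $f^t$ were $C^2$, then $Lf^t = D^2f^t/Df^t$ would be continuous and bounded on a neighbourhood of $0$; integrating $Lf_k^t = D\log Df_k^t$ across a short interval around $b_{i(k)}$ and passing to the limit using the $C^1$ convergence $f_k^t \to f^t$ inherited from $\nu_k \to \nu$ would force $\int Lf_k^t$ to remain bounded on that interval, contradicting the divergence.

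The hardest step is the last: translating the pointwise blow-up of $Lf_k^t$ at the isolated point $b_{i(k)}$ into an integral estimate over a genuine sub-interval. One must verify that $Lf_k^t$ remains large not only at $b_{i(k)}$ but over an interval of definite size around it (determined by the support of the corresponding iterate of $\gamma_k$ at the highland point $a_{i(k)}$), so that the integral comparison actually delivers the contradiction with $C^2$ regularity of the $C^1$ limit $f^t$.
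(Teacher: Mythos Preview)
Your treatment of $(\esti_k)$ and of the $\Cinf$ convergence of $f_k^1$ is essentially the paper's argument (the paper writes the difference $\varphi_k^{p/q_k}-f_0^{p/q_k}$ explicitly as a sum of disjoint translates of $\gamma_k$ supported in $M_k$, then observes $h_{k-1}=\id$ there). For the absence of nonzero fixed points, your sketch is vaguer than the paper's: the paper gives the explicit pointwise bound $\lvert f_k(x)-f_{k-1}(x)\rvert \le 2^{-k-2}\lvert f_0(x)-x\rvert$ (from $\Bars{\gamma_k}_0 \le u_k$ and $\lvert f_0(x)-x\rvert=v_k$ on $M_k$), summing to $\lvert f(x)-x\rvert \ge \tfrac12 \lvert f_0(x)-x\rvert>0$. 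Your ``flow/contraction structure'' remark does not quite do this, though it could be fleshed out.

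The non-$\CC^2$ part is where your argument diverges from the paper and has a real gap. You compute $Lf_k^t(b_{i(k)})$ (with the \emph{same} index $k$ on both sides) and then try to transfer the blow-up to the limit $f^t$ via an integral of $Lf_k^t = D\log Df_k^t$. But, as you yourself flag, pointwise largeness of $Lf_k^t$ at the single point $b_{i(k)}$ says nothing about $\int Lf_k^t$ over a fixed interval; and since $\int_I Lf_k^t = [\log Df_k^t]_{\partial I}$ \emph{does} converge (by $\CC^1$ convergence of $f_k^t$), it is in fact automatically bounded, so no contradiction arises without the extra ``interval of definite size'' analysis you leave undone. That analysis would require controlling $Lh_k$ on a full neighbourhood of $a_{i(k)}$ (where $h_k$ is \emph{not} the identity), which is substantially more work than Lemma~\ref{l:Phi} provides.

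The paper avoids this entirely by a different, cleaner idea: rather than computing $Lf_k^t$ and passing to the limit in $k$, it computes $Lf^t$ \emph{directly} at each $b_{i(l)}$ (for fixed $l$) using the flow-invariance identity
\[
Lf^t \;=\; \frac{D\nu\circ f^t - D\nu}{\nu}\,,
\]
which requires only $\CC^1$ information about $\nu$. Since $\nu_k\to\nu$ in $\CC^1$, one gets $D\nu(a_{i(l)})=\lim_k D\nu_k(a_{i(l)})=\sum_{n\ge l} u_n q_n^2/v_n$ and $D\nu(b_{i(l)})=0$ straight from Lemma~\ref{l:Phi}, whence $Lf^t(b_{i(l)})\le -q_l^2/v_l\to-\infty$. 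This is a genuine computation on the limit object, so no $\CC^2$ passage-to-the-limit is needed. If you want to salvage your route, the cleanest fix is to compute $Lf_k^t(b_{i(l)})$ for \emph{fixed} $l$ and all $k\ge l$ (your chain-rule formula still applies there), observe that this equals $-\sum_{m=l}^k u_m q_m^2/(v_m u_l)$, and then justify $Lf_k^t(b_{i(l)})\to Lf^t(b_{i(l)})$ via the identity above and $\CC^1$ convergence of $\nu_k$ --- which is exactly the paper's argument in disguise.
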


\begin{proof}
Let us start with estimate ($\esti_k$). Let $\{\varphi_k^t,\; t\in\R\}$ denote
the flow of $g_k^* \nu_0$, so that
\begin{equation*}
\varphi_k^t = g_k^{-1} \circ f_0^t \circ g_k. 
\end{equation*}
Since
\begin{align*}
\nu_k = h_k^* \nu_0 = h_{k-1}^* g_k^* \nu_0 \qquad & \text{and} \qquad 
\nu_{k-1} = h_{k-1}^* \nu_0, \\
\intertext{the flows of $\nu_k$ and $\nu_{k-1}$ are given by}
f_k^t = h_{k-1}^{-1} \circ \varphi_k^t \circ h_{k-1}
\quad & \text{and} \quad 
f_{k-1}^t = h_{k-1}^{-1} \circ f_0^t \circ h_{k-1} .
\end{align*}
By definition, $g_k$ commutes with $f_0^{1/{q_k}}$ outside $J_k$. As a
consequence, $g_k$ commutes with any iterate $f_0^{p/{q_k}}$, $p \ge 1$, outside
the interval 
$$ \bigcup_{q=0}^{p-1} f_0^{-q/q_k}(J_k). $$
Thus, $\varphi_k^{p/{q_k}}$ coincides with $f_0^{p/{q_k}}$ outside this
interval. In particular, for $0 \le p \le q_k$, since $f_0^s$ coincides with the
translation by $-s v_{k}$ on $[a_{j(k)} - v_{k}, a_{j(k)} + v_{k}]$ for all $0
\le s \le 1$, $\varphi_k^{p/{q_k}}$ coincides with $f_0^{p/{q_k}}$ outside
\begin{equation*} \label{e:Mk}
M_k  = \left[ a_{j(k)} - \frac {v_{k}}{4q_k}, a_{j(k)} + v_{k} - 
\frac{v_{k}}{4q_k} \right].
\end{equation*}
Moreover, for all $x \in J_k$, 
\begin{align*}
\varphi_k^{1/{q_k}} (x) 
& = g_k^{-1} \circ f_0^{1/{q_k}} \circ g_k(x)\\
& = g_k^{-1} \left( g_k(x) - \frac{v_{k}}{q_k} \right) \\
& = g_k^{-1} \left( x + \gamma_k(x) - \frac{v_{k}}{q_k} \right) \quad
\text{by definition of $g_k$ on $J_k$} \\
& = x - \frac{v_{k}}{q_k} + \gamma_k(x) \quad
\text{since $x + \gamma_k(x) - \frac{v_{k}}{q_k} < \min (\Supp g_k^{-1})$} \\
&= f_0^{1/{q_k}} (x) + \gamma_k(x). 
\end{align*}
Thus, since $\varphi_k^{1/{q_k}}$ coincides with $f_0^{1/{q_k}}$ outside
$J_k$, $\varphi_k^{1/{q_k}} - f_0^{1/{q_k}} = \gamma_k$ on all of $\R_+$.
Similarly, for all $0 \le p \le q_k$,
\begin{equation} \label{e:sigma}
\varphi_k^{p/ {q_k}} (x) - f_0^{ p/ {q_k} } (x)
 = \sum_{q=0}^{p-1} \gamma_k \left( x - \frac {qv_{k}} {q_k} \right) \qquad
\text{for all $x \in \R_+$,}
\end{equation} 
\begin{equation} \label{e:sigmanorm}
\text{so } \norm{ \varphi_k^{ p/ {q_k} } - f_0^{ p/ {q_k} } }_m
 = \norm{ \gamma_k }_m \quad\text{for all } m\in\N.
\end{equation}
But in the region $M_k$ where $\varphi_k^{p/{q_k}}$ and $f_0^{p/ {q_k}}$ differ
for $0 \le p \le q_k$, the diffeomorphism $h_{k-1}$ is the identity since
$$\Supp h_{k-1}\subset \bigcup_{l \le k-1} \Supp g_l \subset
\left[ a_{j({k-1})} - \frac{v_{{k-1}}}{4 q_{k-1}}, +\infty \right).$$
Consequently, for all $0 \le p \le q_k$, the relations
\begin{equation*}
f_k^{p/{q_k}} = h_{k-1}^{-1} \circ \varphi_k^{p/{q_k}} \circ h_{k-1} 
\end{equation*}
and
\begin{equation*}
f_{k-1}^{p/{q_k}} = h_{k-1}^{-1} \circ f_0^{p/{q_k}} \circ h_{k-1} 
\end{equation*}
imply:
\begin{equation}\label{e:fk-fk-1}
f_k^{p/{q_k}} - f^{p/{q_k}}_{k-1} = 
\begin{cases}
\varphi_k^{p/{q_k}} - f_0^{p/{q_k}} \quad\text{ on } M_k\\
0 \quad\text{ outside}, 
\end{cases}
\end{equation}
which, together with \eqref{e:sigmanorm}, gives \eqref{ik}:  
\begin{equation*}
\lrBars{f_k^{p/{q_k}} - f_{k-1}^{p/{q_k}}}_k
\le \lrBars{ \varphi_k^{p/ {q_k} } - f_0^{p/ {q_k} } }_k 
= \bigBars{ \gamma_k }_k 
\le 2^{-k-4}.
\end{equation*}
As a consequence, the time-$1$ maps $f_k^1 = f_k$ converge towards a smooth
diffeomorphism $f$. Let us note furthermore that
\begin{equation}\label{e:nonnul}
\lrbars{\frac{f_k(x) - f_{k-1}(x)}{f_0(x) - x}}\le 2^{-k-2} \quad\text{for all
$k\ge 1$}.
\end{equation}
Indeed, according to \eqref{e:sigma} and \eqref{e:fk-fk-1}, 
\begin{equation*}
f_k(x) - f_{k-1}(x) = 
\begin{cases}
\displaystyle{\sum_{q=0}^{q_k-1}} \gamma_k \left( x - \frac {qv_{k}} {q_k}
\right) \;& \text{ on } M_k,\\ \hspace{1cm} 0 &\text{ outside},
\end{cases}
\end{equation*}
so since at most one term of the above sum is nonzero, 
$$|f_k(x) - f_{k-1}(x)| \le \norm{\gamma_k}_0 \le u_k.$$
But on $M_k$, 
$$|f_0(x)-x| =  v_k.$$
The last two remarks imply inequality \eqref{e:nonnul} since $u_k/v_k\le 2^{-k-2}$. 
Thus for all $x \in \R_+^*$,
\begin{align*}
|f(x) - x | & = \left| f_0(x) - x + \sum_{k\ge 1} \big(f_k(x) - f_{k-1}(x)\big)\right| \\
& \ge |f_0(x) - x| \left(1 - \sum_{k \ge 1} 2^{-k-2}\right) \\
& \ge \frac{|f_0(x) - x|} 2 > 0.
\end{align*}
So $f$ has no other fixed point than $0$. 

We could prove the $\CC^1$ convergence of the vector fields $\nu_k$ by hand, as
in \cite{Ey1} and \cite{Ey2}. But since a third similar proof would be of little
interest, we choose to invoke a different argument here. In fact, the
convergence of the $\nu_k$ can be derived directly from the $\CC^\infty$
convergence of their time-$1$ maps, as an immediate consequence of a theorem by
J.-C. Yoccoz \cite[chap. 4, Theorem 2.5]{Yo} asserting the continuous dependence
of the Szekeres vector field with respect to its time-$1$ map (in a more general
setting and for suitably defined topologies). We denote by $\nu$ the limit of
$\nu_k$ and by $\{f^t,\; t\in \R\}$ the flow of $\nu$ (so that $f=f^1$). For all
$t\in \R$, $f^t$ is the limit of $f_k^t$ in $\CC^1$ topology.
\medskip

Now let $t \in H_{k_0}$ for some $k_0 \ge 1$. We want to prove that $Lf^{t}$ is
not continuous at $0$. To do that, we compute $Lf^t$ at $b_{ i(l) } =
f_0^{-t}(a_{i(l)})$ for all $l \ge k_0 +1$. By invariance of $\nu$ under its
flow,
$$Df^t = \frac{\nu \circ f^t}{\nu} \quad \text{on } \R_+^*$$
from which one computes
$$Lf^t = \frac{D\nu \circ f^t - D\nu}{\nu}.$$
In particular,
$$Lf^{t}( b_{i(l)} ) =
- \frac{ D\nu(f^{t}(b_{i(l)})) - D\nu(b_{i(l)}) }{u_{l}}.$$
But for all $k\ge k_0$, 
\begin{align*}
f_k^{t}(b_{i(l)}) & = h_k^{-1} \circ f_0^{t} \circ h_k(b_{i(l)}) \\
& = h_k^{-1} \circ f_0^{t}(b_{i(l)}) \quad \text{according to Lemma
\ref{l:Phi}},\\
& = h_k^{-1}(a_{i(l)}) = a_{i(l)} \quad \text{according to
Lemma \ref{l:Phi} again} ,
\end{align*}
so $f^{t}(b_{i(l)}) = \lim_k f_k^{t}(b_{i(l)}) = a_{i(l)}$. Besides, the derivative of 
$\nu_k = h_k^*\nu_0$ is
$$D\nu_k = D\nu_0 \circ h_k - (\nu_0 \circ h_k) \frac{Lh_k}{Dh_k},$$
so for all $k \ge l$, according to points 2 and 3 of Lemma
\ref{l:Phi},
\begin{equation}\label{e:Dnuai}
D\nu_k (a_{i(l)}) = D\nu_0 (a_{i(l)}) -
\nu_0(a_{i(l)}) Lh_k (a_{i(l)})
 = \sum_{n=l}^k \frac{u_{n} q_n^2}{v_{n}},
\end{equation}
and according to point 1 of the same lemma,
\begin{equation}\label{e:Dnubi}
D\nu_k (b_{i(l)}) = D\nu_0 (b_{i(l)}) - \frac{Lh_k}{Dh_k}(b_{i(l)})
\nu_0(b_{i(n_l)}) = 0 - 0 = 0.
\end{equation}

%\begin{remarque}\label{r:Dnuk}
%Notons pour plus tard que, $\nu_k$ étant lisse, il existe $c \in
%[a_{i(n_l)}, b_{i(n_l)}]$ tel que 
%\begin{equation*}
%\frac{D\nu_k (b_{i(n_l)}) - D\nu_k (a_{i(n_l)})}{b_{i(n_l)} -
%a_{i(n_l)}} = D^2\nu_k(c), \; \text{de sorte que} \; |D^2\nu_k(c)| >
%\frac{2 w_{n_l} q_l^2}{v_{n_l} u_{n_l}}.
%\end{equation*}
%\medskip
%\end{remarque}
The vector fields $\nu_k$ converge towards $\nu$ in $\CC^1$ topology on $\R_+$, so 
Formulae \eqref{e:Dnuai} and \eqref{e:Dnubi} give
\begin{equation*}
D\nu(a_{i(l)}) = \sum_{n\ge l} \frac{u_{n} q_n^2}{v_{n}}\quad
\text{and}\quad D\nu(b_{i(l)}) = 0.
\end{equation*}
In the end, 
$$Lf^{t}(b_{i(l)}) = -\sum_{n\ge l} \frac{u_{n} q_n^2}{v_{n} u_{l}} <
-\frac{q_l^2}{v_{l}} \to - \infty \quad [l \to \infty]$$
so $f^{t}$ is not $\CC^2$ at $0$.
\end{proof}

\section{Polynomial control of the manufactured objects}

\begin{proposition}\label{l:klemma}
There are maps $n$ and $c \from \N^2 \to \N^*$ such that for any
increasing sequence $(q_k)_{k \ge 1}$ of positive integers, the vector fields
$(\nu_k)_{k \ge 0}$ built from $(q_k)_{k \ge 1}$ and their flows
$\{f_k^t,\;t \in\R\}$ satisfy
\begin{equation}\label{e:klemma}
\norm{\nu_k \circ f_k^t}_r \le c(k,r) q_k^{n(k,r)} \quad \text{for all $(k,r) \in
\N^2$ $($with $q_0:=1)$.}
\end{equation}

\end{proposition}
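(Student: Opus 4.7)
The plan is to prove \eqref{e:klemma} by induction on $k$, maintaining polynomial-in-$q_k$ bounds not only on the target $\Bars{\nu_k\circ f_k^t}_r$ (uniformly in $t$) but also on the auxiliary quantities $\Bars{h_k}_r$, $\Bars{h_k^{-1}}_r$ and $\Bars{Dh_k}_r$, with constants and exponents defined recursively in $k$ and $r$. A crucial preliminary is that the initial data $\nu_0$ and $f_0^t$ admit $\Cinf$ bounds independent of the sequence $(q_n)$: since $u_n \le v_n = 2^{-(n+3)^2}$ and the derivatives of $\nu_0$ on $[2^{-n-1},2^{-n}]$ are bounded by $2^{m(n+1)}v_n$, one gets $\Bars{\nu_0}_r \le C(r)$, and the flow invariance $\nu_0\circ f_0^t = Df_0^t\cdot\nu_0$ combined with Leibniz and Fa\`a di Bruno yields a $t$-uniform bound $\Bars{\nu_0\circ f_0^t}_r \le c(0,r)$, handling the base case $k=0$.

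For the inductive step, the first task is polynomial-in-$q_k$ bounds on $\Bars{g_k}_r$ and $\Bars{g_k^{-1}}_r$. The formula \eqref{e:gamma} gives $\Bars{\gamma_k}_r \le C(k,r)\,q_k^{\max(0,r-k)}$, so on $J_k$ where $g_k = \id+\gamma_k$ the bound is immediate. On each conjugate piece $f_0^{-p/q_k}(J_k)$, $g_k$ is $\id+\gamma_k$ conjugated by $f_0^{p/q_k}$: for $0 \le p \le q_k$ the latter reduces to a translation by \eqref{e:gqnb}, so the bound transfers with no loss; for $p > q_k$ one exploits the commutation of $g_k$ with the translation $x\mapsto x+u_1$ on $[1,+\infty)$, reducing to finitely many pieces in a fundamental domain, with global derivative control following from Fa\`a di Bruno and the base-case bound on $f_0^t$. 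One then propagates to $h_k = g_k\circ h_{k-1}$ via Fa\`a di Bruno: by induction $\Bars{h_{k-1}}_r$ is polynomial in $q_{k-1}$, hence in $q_k$ since $q_{k-1}\le q_k$, and combined with the polynomial bound on $\Bars{g_k}_r$ this makes $\Bars{h_k}_r$ polynomial in $q_k$ (with the exponent inflated by a factor of at most $r$). The same argument applied to $h_k^{-1} = h_{k-1}^{-1}\circ g_k^{-1}$, together with the lower bound $Dg_k \ge 1-\Bars{D\gamma_k}_0 > 0$ coming from \eqref{e:gammak}, gives analogous control on $\Bars{Dh_k}_r$ and $\Bars{1/Dh_k}_r$.

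The conclusion then follows by applying Fa\`a di Bruno and Leibniz to the identity $\nu_k\circ f_k^t = (\nu_0\circ f_0^t\circ h_k)/(Dh_k\circ f_k^t)$ (which stems from $\nu_k = (\nu_0\circ h_k)/Dh_k$ and $h_k\circ f_k^t = f_0^t\circ h_k$): combining the $t$-uniform base-case bound on $\nu_0\circ f_0^t$ with the inductive bounds on $h_k$, $h_k^{-1}$ and $Dh_k$ yields the desired $\Bars{\nu_k\circ f_k^t}_r \le c(k,r)\,q_k^{n(k,r)}$. The main technical obstacle is the extension of $g_k$ to the conjugate pieces $f_0^{-p/q_k}(J_k)$ with $p$ large, where a priori the derivatives of $f_0^{p/q_k}$ could blow up; this is tamed by the eventual periodicity of $g_k$ on $[1,+\infty)$ and by the precise lowland/highland profile of $\nu_0$. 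A closely related difficulty already arises in the base case, since $\Bars{Df_0^t}_0$ itself is not uniformly bounded in $t$: it is only through the flow invariance $\nu_0\circ f_0^t = Df_0^t\cdot\nu_0$ that the product acquires uniform bounds, via cancellation between the large dilation factors in highland-to-lowland transitions and the correspondingly small values of $\nu_0$ there.
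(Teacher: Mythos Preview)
Your overall architecture matches the paper's: induction on $k$, with the heart of the matter being a polynomial-in-$q_k$ control on $g_k$ and $g_k^{-1}$ (the paper's Lemma~\ref{l:gk}), after which Fa\`a di Bruno and Leibniz close the induction. Your final identity $\nu_k\circ f_k^t=(\nu_0\circ f_0^t\circ h_k)/(Dh_k\circ f_k^t)$ is a legitimate alternative to the paper's recursive formulation via $\nu_k=g_k^*\nu_{k-1}$; either route works once $g_k$ is under control.

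The gap is in your control of $g_k$ on the conjugate pieces $f_0^{-p/q_k}(J_k)$ with $p>q_k$. You propose to invoke the periodicity of $g_k$ on $[1,+\infty)$, but this leaves the entire intermediate region $[a_{j(k)-1},1]$ unaccounted for: those pieces cross the highlands $H_{k-1},\dots,H_1$ and the transition zones, and there is nothing periodic about $g_k$ there. Moreover, even on a single piece inside $[1,1+u_1]$, the relevant conjugating map is $f_0^{p/q_k}$ with $p/q_k$ of order $j(k)$, far outside the range $t\in[0,1]$ covered by the ``base-case bound on $f_0^t$''; iterating the $t\in[0,1]$ bound $j(k)$ times would produce growth like $q_k^{\text{poly}(q_k)}$, not polynomial. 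Your closing sentence gestures at ``the precise lowland/highland profile of $\nu_0$'' but does not supply an argument.

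What actually resolves this---and what the paper does---is to bypass any iteration and compute $Df_0^{p/q_k}$ directly on each piece via flow invariance: since $f_0^{p/q_k}$ sends the piece into $J_k$ where $\nu_0\equiv -v_k$, one has $Df_0^{p/q_k}=-v_k/\nu_0$ on $f_0^{-p/q_k}(J_k)$, hence $D^{r+1}f_0^{p/q_k}=Q_r(\nu_0,\dots,D^r\nu_0)/\nu_0^{2^r}$ for a universal polynomial $Q_r$. Combined with the uniform lower bound $|\nu_0|\ge u_k$ on $[\max J_k,+\infty)$ and the definition of $u_k$, this gives a $p$-independent polynomial-in-$q_k$ bound on all derivatives of $f_0^{p/q_k}$ restricted to the pieces, which is exactly what is needed. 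You already use flow invariance in the base case; the point is to use it again here, in this sharper form.
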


This proposition relies on the following assertions.

\begin{lemma}\label{l:nu0}
There are universal bounds on all derivatives of $\nu_0$ and $f_0^t$, $t\in
[0,1]$, \emph{i.e.} bounds which depend neither on $(q_k)_k$ nor on $t$.
\end{lemma}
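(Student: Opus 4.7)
The plan has two steps: first, universal bounds on $\Bars{\nu_0}_r$ from the explicit formula~\eqref{e:nu0}; second, transfer of these bounds to $f_0^t$ by Gronwall applied inductively to the flow equation $\partial_t f_0^t = \nu_0 \circ f_0^t$.

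\textbf{Step 1.} I would start from~\eqref{e:nu0} restricted to a dyadic block $[2^{-n-1},2^{-n}]$, where $\nu_0$ is built from $\alpha$ and $\beta$ rescaled by a factor $2^{n+1}$, with amplitudes $u_n - u_{n+1}$ and $v_n - u_n$, both bounded by $v_n$ (the inequality $0 < u_n < v_n$ reading off from~\eqref{e:uk}, and $u_{n+1} < v_{n+1} < v_n$ by monotonicity of $v_n$). Differentiating $r$ times produces
\begin{equation*}
\bigbars{D^r\nu_0(x)} \le 2 v_n \, 2^{r(n+1)} \bigl( \Bars{\alpha}_r + \Bars{\beta}_r \bigr) = 2 \cdot 2^{r(n+1)-(n+3)^2} \bigl( \Bars{\alpha}_r + \Bars{\beta}_r \bigr).
\end{equation*}
The exponent $r(n+1)-(n+3)^2$ is a downward parabola in $n$ whose maximum is controlled by $r$ alone, so $\Bars{\nu_0}_r \le C_0(r)$ for some universal $C_0(r)$. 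The only trace of $(q_k)_k$ inside $\nu_0$ sits in $u_n$, and the crude estimate $u_n < v_n$ erases it.

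\textbf{Step 2.} Differentiating $\partial_t f_0^t = \nu_0 \circ f_0^t$ in $x$ with Fa\`a di Bruno's formula yields, for each $r \ge 1$,
\begin{equation*}
\partial_t D^r f_0^t = (D\nu_0 \circ f_0^t) \cdot D^r f_0^t + R_r(t,x),
\end{equation*}
where $R_r$ is a polynomial in the quantities $D^j \nu_0 \circ f_0^t$ for $1 \le j \le r$ and $D^i f_0^t$ for $1 \le i \le r-1$, containing no $D^r f_0^t$ term. I would then induct on $r$. For $r=1$ the remainder $R_1$ vanishes, and Gronwall with initial data $Df_0^0 = 1$ gives $\bars{Df_0^t(x)} \le \exp\bigl(C_0(1)\bigr)$ uniformly for $t \in [0,1]$. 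For $r \ge 2$, the induction hypothesis (universal bounds on $\bars{D^i f_0^t}$ for $i<r$) combined with Step~1 makes $\bars{R_r}$ bounded by some $C_1(r)$; Gronwall applied to the linear equation for $D^r f_0^t$ with initial data $D^r f_0^0 = 0$ (since $f_0^0 = \id$) then delivers a universal bound on $\bars{D^r f_0^t}$ for $t \in [0,1]$, closing the induction.

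\textbf{Main obstacle.} Nothing substantial blocks the argument: the single crucial observation is that the super-geometric decay $v_n = 2^{-(n+3)^2}$ was engineered precisely to absorb the polynomial-in-$r$ dilation factor $2^{r(n+1)}$ coming from the local rescalings of $\alpha$ and $\beta$. Once $\nu_0$ is under universal control, the flow estimates reduce to a standard Gronwall induction.
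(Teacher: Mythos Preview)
Your proposal is correct and follows exactly the approach the paper sketches: the paper's own proof is a two-sentence outline saying the formula~\eqref{e:nu0} makes the bounds on $\nu_0$ independent of $(u_n)_n$ and that the flow bounds then follow from a generalized Gronwall inequality. You have simply filled in the details the paper omits, including the key observation that the crude estimate $\bars{u_n - u_{n+1}},\, \bars{v_n - u_n} \le v_n$ erases the $(q_k)_k$-dependence and that the super-geometric decay of $v_n$ absorbs the dilation factors $2^{r(n+1)}$.
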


\begin{lemma}\label{l:gk}
There is a polynomial \up(in $q_k$\up) control on the growth of the derivatives
of $g_k$, \emph{i.e.} there exist universal maps $c, n \from \N^*\times \N \to
\N^*$ such that for any $(q_k)_{k \ge 1}$, the associated $(g_k)_{k \ge 1}$
satisfies
\begin{equation}\label{e:bound}
\max \left( \Bars{g_k - \id}_r, \Bars{g_k^{-1} - \id}_r \right)< c(k,r) q_k^{n(k,r)}
\end{equation}
for all  $(k,r) \in \N^* \times \N$.
\end{lemma}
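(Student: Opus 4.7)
My plan is to leverage the explicit piecewise description \eqref{e:gqn}--\eqref{e:gqnb} of $g_k$: it equals the identity on $[0, a_{j(k)} - v_k/4q_k]$, and on each fundamental interval $f_0^{-p/q_k}(J_k)$ of $f_0^{1/q_k}$ ($p\ge 0$) it is a conjugate of the ``bump'' $\id + \gamma_k$ by $f_0^{p/q_k}$. Combined with the $u_1$-periodicity of $g_k$ on $[1,+\infty)$ and the fact $g_k(1)=1$ established in the excerpt, this reduces the problem to bounding $g_k-\id$ (and its inverse) on each of these fundamental intervals lying in a bounded initial segment, and then taking the sup.

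First I would directly compute the $\CC^r$ norms of $\gamma_k$. Differentiating \eqref{e:gamma} and inserting the definition \eqref{e:uk} of $u_k$ yields
\begin{equation*}
\|\gamma_k\|_r \le 2^{-k-4}\,(q_k/v_k)^{\max(0,\,r-k)}\cdot C(k,r),
\end{equation*}
where $C(k,r)$ depends only on $k,r$ (since $v_k$ and $\|\gamma\|_m$ depend only on $k,m$). This is already polynomial in $q_k$ with the desired form. By \eqref{e:gqnb}, on the ``inner'' fundamental intervals $f_0^{-p/q_k}(J_k)$ with $0\le p\le q_k$, the difference $g_k-\id$ is just a translate of $\gamma_k$, so the same bound applies there.

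For the ``outer'' intervals (reached by iterating $f_0$), I would repeatedly differentiate the invariance formula \eqref{e:Dgqn2},
\begin{equation*}
Dg_k = \frac{\nu_0\circ g_k}{\nu_0}\cdot\bigl(1+D\gamma_k\circ f_0^{p/q_k}\bigr),
\end{equation*}
using that $f_0^{p/q_k}(x)\in J_k$ (where $\nu_0\equiv -v_k$) to simplify $Df_0^{p/q_k}(x)=-v_k/\nu_0(x)$. Each differentiation produces a rational expression whose numerator is a polynomial in $D^l\nu_0$ (at $x$, $g_k(x)$, and the orbit points) and in $D^l\gamma_k$ at $f_0^{p/q_k}(x)$, and whose denominator is a power of $\nu_0(x)$. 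The $\nu_0$-factors are universally bounded by Lemma \ref{l:nu0}, while the $\gamma_k$-factors deliver the polynomial $q_k$-growth already identified in Step~1; a Fa\`a di Bruno bookkeeping then combines them into $\|g_k-\id\|_r \le c(k,r)\,q_k^{n(k,r)}$. The bound on $\|g_k^{-1}-\id\|_r$ follows by applying Fa\`a di Bruno to $g_k\circ g_k^{-1}=\id$ and using $\|D\gamma_k\|_0\le\|\gamma_k\|_1\le 2^{-k-4}$ (from \eqref{e:gammak}) to invert $Dg_k$ with a universal lower bound, so that the inversion formula expresses $D^m g_k^{-1}$ as a rational expression in the $D^l g_k$ with controlled denominators, preserving the polynomial form.

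The main obstacle lies in the control of the negative powers of $\nu_0(x)$ that appear in the iterated differentiation: \emph{a priori} they could be as large as $1/u_n$ for $n\le k$ when $x$ lies in a highland $H_n$. The key is that these factors must be shown to combine with the positive powers of $v_k$ brought in by $Df_0^{p/q_k} = -v_k/\nu_0(x)$ and with the specific form of $u_k$ chosen in \eqref{e:uk} to yield a total bound which is polynomial in $q_k$ alone with universal coefficients $c(k,r)$. This cancellation is what the careful choice \eqref{e:uk} of $u_k=2^{-k-4}q_k^{-k}v_k^k\|\gamma\|_k^{-1}$ is designed to engineer.
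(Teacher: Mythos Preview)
Your outline matches the paper's approach closely: bound $\gamma_k$ directly, use translates of $\gamma_k$ on the inner fundamental intervals via \eqref{e:gqnb}, and then control the outer intervals through the conjugation formula together with Fa\`a di Bruno, deducing the bound on $g_k^{-1}$ from that on $g_k$ via the inverse-function identity. The reduction to a bounded initial segment via the $u_1$-periodicity is correct (the paper does not state it this way, since its bounds on $D^m f_0^p$ turn out automatically uniform in $p$, but your formulation is fine).

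The gap is in your last paragraph. You correctly isolate the crux---the negative powers of $\nu_0$ produced by iterated differentiation---but you do not close it, and the mechanism you gesture at (a ``cancellation'' of the $1/\nu_0$ factors against the positive powers of $v_k$ in $Df_0^{p/q_k}=-v_k/\nu_0$) is not what actually happens: $v_k$ is a fixed (tiny) universal constant, so multiplying by it cannot compensate a large $1/|\nu_0|$. The resolution is simpler and direct. On the support of $g_k-\id$, namely $[\min J_k,+\infty)$, one has the uniform lower bound
\[
|\nu_0(x)|\;\ge\;u_k .
\]
Indeed, the minimum of $|\nu_0|$ on that half-line is attained on some highland $H_n$ with $n\le k$, where $|\nu_0|=u_n$; but the sequence $(u_n)$ defined in \eqref{e:uk} is decreasing in $n$ (each factor $2^{-n-4}$, $q_n^{-n}$, $v_n^{\,n}$, $\|\gamma\|_n^{-1}$ is nonincreasing, using that $(q_n)$ is increasing), so $u_n\ge u_k$. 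Hence every power $|\nu_0|^{-m}$ is bounded by $u_k^{-m}=(2^{k+4}v_k^{-k}\|\gamma\|_k)^m\,q_k^{\,mk}$, which is polynomial in $q_k$ with coefficients depending only on $(k,m)$. This single observation replaces your ``cancellation'' and finishes the argument.

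A minor stylistic point: rather than recursively differentiating \eqref{e:Dgqn2} (which reintroduces $Dg_k$ through $D(\nu_0\circ g_k)$), the paper applies Fa\`a di Bruno once to the conjugation formula $g_k=f_0^{-p}\circ(\id+\gamma_k)\circ f_0^{p}$, reducing everything to bounds on $D^m f_0^{p}$ on $f_0^{-p}(J_k)$. Since $\nu_0\circ f_0^p\equiv -v_k$ is locally constant there, these derivatives are $Q_r(\nu_0,\dots,D^r\nu_0)/\nu_0^{\,m}$ for universal polynomials $Q_r$, and the bound $|\nu_0|\ge u_k$ just described concludes. Your recursive route also works once that lower bound is in place, but is a bit heavier.
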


\begin{proof}[Proof of Proposition \ref{l:klemma} using Lemmas \ref{l:nu0} and
\ref{l:gk}]
We proceed by induction on $k$. Step $k=0$ follows directly from Lemma
\ref{l:nu0} and Fa\`a di Bruno's Formula. For $k \ge 1$, step $k$ follows from step $k-1$ and Lemma \ref{l:gk} applying Faà di Bruno's and
Leibnitz' derivation formulas to the relations
$$\nu_k = g_k^*\nu_{k-1} = (\nu_{k-1} \circ g_k)(Dg_k^{-1} \circ g_k) \quad 
\text{and} \quad f_k^t = g_k^{-1} \circ f_{k-1}^t \circ g_k.$$
\end{proof}

\begin{proof}[Proof of Lemma \ref{l:nu0}]
It is clear from the definition \eqref{e:nu0} of $\nu_0$ that its derivatives
are bounded independently of the coefficients $(u_n)_n$, and thus of $(q_n)_n$.
Similar bounds on the derivatives of the flow (for a compact set of times) are
then easilly derived from an appropriate (generalized) version of Gronwall's
Lemma.
\end{proof}

\begin{proof}[Proof of Lemma \ref{l:gk}]
Let $k \ge 1$. The orders $r=0$ and $r=1$ are easily settled using $(9')$,
\eqref{e:gammak} and \eqref{e:Dgqn2}. In particular,
\begin{equation} \label{e:gk-id}
\Bars{g_k - \id}_1 < \frac 1 2 \quad \text{for all $k$.}
\end{equation}
Note that given \eqref{e:gk-id}, a polynomial (in $q_k$) control on the growth
of the derivatives of $g_k-\id$ automatically gives one on $g_k^{-1}-\id$. This
is because the inverse of any smooth diffeomorphism $g$ satisfies
\begin{equation}\label{e:inverse}
(D^r g^{-1}) \circ g= \frac {P_r(Dg,...,D^rg)}{(Dg)^{2r+1}},
\end{equation}
where $P_r$ is a universal polynomial in $r$ variables (independent of $g$), and
in our case, $Dg = Dg_k$ is bounded below independently of $(q_n)_n$. Formula
\eqref{e:inverse} is obtained by induction on $r$, starting with the identity
$Dg^{-1} \circ g \times Dg = 1$ and using Faà di Bruno's Formula.

We now focus on $g_k-\id$. Recall that
\begin{equation}\label{e:gkrecall}
g_k = \begin{cases} \hspace{1.4cm} \id & \text{on $[0, \min J_k]$}\\
\hspace{1cm} \id + \gamma_k & \text{on $J_k$}\\
f_0^{-p} \circ (\id + \gamma_k) \circ f_0^p & \text{on $f_0^{-p}(J_k)$, for all
$p\ge 1$.}
\end{cases}
\end{equation}
Thus, on $[0, \max J_k]$, 
$$\left| D^r(g_k - \id) \right| = \lrbars{D^r\gamma_k} 
\le u_k \left( \frac{q_k}{v_k} \right)^r \norm{\gamma}_r 
\le c(r,k) q_k^{n(r,k)},  $$
with 
$$ c(r,k ) = \frac {2^{-k-4} \norm{\gamma}_r v_k^{k-r}}{\norm{\gamma}_{k}}
\quad \text{and} \quad n(r,k) = r-k,$$
by definition \eqref{e:uk} of $u_k$. Then, given \eqref{e:gkrecall} 
(and Faà di Bruno's formula again), a uniform (in $p$) polynomial (in $q_k$) control 
on the derivatives of $f_0^p \res {f_0^{-p}(J_k)}$ is sufficient to ensure the desired control on 
$D^r(g_k - \id)$ on the rest of $\R_+$. 

The vector field $\nu_0$ being preserved by its own flow, 
$$Df_0^p = \frac{\nu_0 \circ f_0^p }{\nu_0} \quad \text{on $\R_+^*$}.$$
In particular, on $f_0^{-p}(J_k)$,
$$Df_0^p = -\frac{v_k}{\nu_0}, $$
and thus, for all $r \ge 1$,
\begin{equation}\label{e:Df0p}
D^{r+1}f_0^p = \frac{Q_r(\nu_0,...,D^r\nu_0)}{\nu_0^{2^r}},
\end{equation}
where $Q_r$ is a universal polynomial (independent of $\nu_0$) in $r+1$
variables. According to Lemma \ref{l:nu0}, for each $r$, the numerator of
\eqref{e:Df0p} is bounded independently of $(q_k)_k$. As for the denominator,
$|\nu_0(x)| \ge u_k$ for all $x \in [\max J_k,\infty)$, so by definition
\eqref{e:uk} of $u_k$,
$$\frac{1}{\nu_0^{2^r} } \le \left(2^{k+4}v_k^{-k} 
\norm{\gamma}_{k} \right)^{2^r} q_k^{2^r(k+1)},$$
which is the kind of control we were looking for (the bound does not depend on $p$).
\end{proof}

\section{Convergence of the time-$\alpha$ maps}
\label{s:conv}

\begin{proposition}\label{p:principal}
Let $\alpha$ be a Liouville number. There is a sequence $(p_k/q_k)_{k \ge 1}$
of rational approximations of $\alpha$ such that the vector field $\nu$ built
from $(q_k)_{k \ge 1}$ has all the properties described in Theorem
\hyperref[t:principalb]{A'}.
\end{proposition}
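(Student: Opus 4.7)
The plan is to construct the rational approximants $(p_k/q_k)_{k\ge 1}$ recursively, exploiting the Liouville approximability of $\alpha$ to defeat the polynomial growth in $q_k$ of the derivatives controlled by Proposition~\ref{l:klemma}. We may assume $\alpha \in (0,1)$. With $c, n$ the universal maps of Proposition~\ref{l:klemma}, the recursion runs as follows: having already chosen $q_1 < \cdots < q_{k-1}$, set $N_k = n(k,k) + 1$ and use the Liouville property to produce $p_k \in \Z$ and $q_k > q_{k-1}$, with $q_k$ as large as needed, satisfying
\begin{equation*}
   \bigl|\alpha - p_k/q_k\bigr| \le q_k^{-N_k}.
\end{equation*}
I would choose $q_k$ large enough that both $c(k,k)\,q_k^{n(k,k)-N_k} \le 2^{-k-4}$ and $c(k-1,k)\,q_{k-1}^{n(k-1,k)}\,q_k^{-N_k} \le 2^{-k-4}$. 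This is the sole place where the Liouville hypothesis intervenes.

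The key estimate controls the $\CC^k$-distance between consecutive time-$\alpha$ maps by the triangle inequality
\begin{equation*}
 \bigBars{f_k^\alpha - f_{k-1}^\alpha}_k
 \le \bigBars{f_k^{p_k/q_k} - f_{k-1}^{p_k/q_k}}_k
    + \sum_{l \in \{k-1,k\}} \bigBars{f_l^\alpha - f_l^{p_k/q_k}}_k.
\end{equation*}
The first summand is $\le 2^{-k-4}$ by estimate~($\esti_k$) of Proposition~\ref{p:cv}. For the other two, the identity $f_l^t - f_l^s = \int_s^t \nu_l \circ f_l^u\,du$, differentiated up to $k$ times under the integral and combined with Proposition~\ref{l:klemma}, yields
\begin{equation*}
 \bigBars{f_l^\alpha - f_l^{p_k/q_k}}_k
 \le \bigbars{\alpha - p_k/q_k}\; c(l,k)\, q_l^{n(l,k)} \le 2^{-k-4}.
\end{equation*}
Summing, $\Bars{f_k^\alpha - f_{k-1}^\alpha}_k \le 2^{-k-2}$, so $(f_k^\alpha)_k$ converges in $\Cinf$ topology. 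By Yoccoz's theorem on continuous dependence of the Szekeres flow on its $\CC^1$ vector field, invoked already in the proof of Proposition~\ref{p:cv}, the limit must coincide with $f^\alpha$, the time-$\alpha$ map of the limit vector field $\nu$, which is therefore smooth.

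The Cantor set $K$ arises from the very same estimate, which depends on $\alpha$ only through $|\alpha - p_k/q_k|$: for any $t \in [0,1]$ admitting integers $m_k$ with $|t - m_k/q_k| \le \tau_k := q_k^{-N_k}$, the sequence $(f_k^t)_k$ converges in $\Cinf$, so that $f^t$ is smooth. Set
\begin{equation*}
   K = \bigcap_{k \ge 1}\ \bigcup_{0 \le m \le q_k} [m/q_k - \tau_k,\; m/q_k + \tau_k];
\end{equation*}
by construction $\alpha \in K \subset \{\text{smooth times of } \nu\}$. If we further require $q_{k+1}\tau_k \ge 3$ at every step ---~a lower bound on $q_{k+1}$, compatible with Liouville~--- each stage of the intersection splits each previous interval into at least two, so that $K$ is a genuine Cantor set. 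Finally, to guarantee the required non-$\CC^2$ time of Theorem~\hyperref[t:principalb]{A'}, one refines the recursive choice so as to force $H \neq \emptyset$ in Proposition~\ref{p:cv}: for instance, by arranging that the central intervals defining $H_{k_0}$ nest with a common center, which can be enforced by imposing mild additional arithmetic conditions on the $q_k$'s along with the Liouville approximation requirement.

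The main obstacle is precisely this simultaneous calibration of $(q_k)$ at each step, reconciling three constraints: the Liouville approximation $|\alpha - p_k/q_k| \le q_k^{-N_k}$; the splitting condition $q_{k+1}\tau_k \ge 3$ yielding the Cantor structure of $K$; and supplementary arithmetic conditions ensuring that $H$ is nonempty. What makes the scheme flexible enough to accommodate all three is that Liouville imposes only a lower bound on the \emph{rate} of approximation, leaving us free to enlarge $q_k$ and to select it within prescribed arithmetic families at each step.
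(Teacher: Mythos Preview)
Your proposal is correct and follows essentially the same route as the paper: the triangle splitting of $\bigBars{f_k^\alpha - f_{k-1}^\alpha}_k$, the integral representation $f_l^t - f_l^s = \int_s^t \nu_l\circ f_l^u\,du$ combined with Proposition~\ref{l:klemma}, and the resulting Cantor set of smooth times are exactly the paper's argument (its Lemma~\ref{l:alpha}).  You are in fact slightly more careful than the paper in isolating the condition needed for the $l=k-1$ term.

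The one place where you are vaguer than necessary is the nonemptiness of $H$.  You describe this as ``the main obstacle'' and appeal to unspecified ``mild additional arithmetic conditions'' forcing the intervals to ``nest with a common center''.  No such delicacy is needed: each interval at level~$l$ in the definition~\eqref{e:Hn0} of $H_{k_0}$ has length $\tfrac{1}{2q_l}$, while consecutive level-$(l+1)$ intervals are $\tfrac{1}{q_{l+1}}$ apart; so as soon as $q_{l+1}$ is large compared to $q_l$ (say $q_{l+1}>4q_l$, which is already implied by your splitting condition $q_{k+1}\tau_k\ge 3$), every level-$l$ interval contains at least two level-$(l+1)$ intervals, and $H$ is itself a Cantor set.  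In other words, all three constraints you list are merely lower bounds on $q_{k+1}$ in terms of $q_k$, and the Liouville hypothesis lets you satisfy any such lower bound---so the ``simultaneous calibration'' costs nothing.
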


Let $\alpha$ be a Liouville number. By definition, there exists a sequence
$(p_k/q_k)_{k \ge 1}$ of rational approximations of $\alpha$ satisfying
\begin{equation}\label{e:C}
\left| \alpha - \frac{p_k}{q_k} \right| < \frac{2^{-k-2} c(k,k)^{-1} }{q_k^{n(k,k)}}
\quad \text{for all $k\ge 1$} \tag{$C_k$}
\end{equation}
(where $c$ and $n$ are the maps given by Proposition \ref{l:klemma}), with the additional requirement that
\begin{equation} \label{e:C'}
\frac1{q_{k+1}}< \frac{2^{-k-2} c(k,k)^{-1} }{q_k^{n(k,k)}}\quad\text{for all $k\ge 1$},\tag{$C_k'$}
\end{equation}
so that every segment $\left[\frac{p}{q_k}- \frac{2^{-k-2} c(k,k)^{-1} }{q_k^{n(k,k)}}, \frac{p}{q_k}+\frac{2^{-k-2} c(k,k)^{-1} }{q_k^{n(k,k)}}\right]$, $p\in\Z$, contains at least two elements of $\frac{1}{q_{k+1}}\Z$, making 
\begin{equation}\label{e:K}
K'= \bigcap_{k\ge 1}\bigcup_{0\le p\le q_k} \left[\frac{p}{q_k}- \frac{2^{-k-2} c(k,k)^{-1} }{q_k^{n(k,k)}}, \frac{p}{q_k}+\frac{2^{-k-2} c(k,k)^{-1} }{q_k^{n(k,k)}}\right]
\end{equation}
a Cantor set, with $\alpha\in K:=K'+[\alpha]$ (where $[\alpha]$ denotes the integral part of $\alpha$). Similarly, for such a sequence $(q_k)_k$, the set $H$ defined by \eqref{e:H} is a Cantor
set (in particular nonempty). Hence, Proposition \ref{p:principal}, and thus
Theorem \hyperref[t:principalb]{A'}, follow from Lemma \ref{l:alpha} below and Proposition
\ref{p:cv}.
\begin{lemma}\label{l:alpha}
Let $\alpha$ be a Liouville number, $(p_k/q_k)_{k \ge 1}$ a sequence
of rational approximations of $\alpha$ satisfying \eqref{e:C} and \eqref{e:C'} for all $k \ge 1$, and $K'$ the Cantor set defined by \eqref{e:K}.
Then the vector fields $\nu_k$ associated to $(q_k)_{k \ge 1}$ and their flows
satisfy
\begin{equation}
\bigBars{f_k^\tau - f_{k-1}^\tau }_k \le 2^{-k} \quad \text{for all $k\ge 1$ and $\tau\in K'$}.
\end{equation}
As a consequence, the time-$\tau$ maps of the limit $\nu$ of $\nu_k$ are smooth for all $\tau\in K'$.
\end{lemma}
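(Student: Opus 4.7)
The plan is to use, for each $\tau\in K'$ and $k\ge 1$, the rational approximation built into $K'$: by definition there is some $p\in\{0,\dots,q_k\}$ with $\bars{\tau - p/q_k} \le \delta_k := 2^{-k-2}\, c(k,k)^{-1}\, q_k^{-n(k,k)}$. I will then telescope
\begin{equation*}
f_k^\tau - f_{k-1}^\tau = \bigl(f_k^\tau - f_k^{p/q_k}\bigr) + \bigl(f_k^{p/q_k} - f_{k-1}^{p/q_k}\bigr) + \bigl(f_{k-1}^{p/q_k} - f_{k-1}^\tau\bigr),
\end{equation*}
bound the middle piece in $\CC^k$ by $2^{-k-4}$ using the rational estimate $(\esti_k)$ from Proposition~\ref{p:cv}, and bound the two outer pieces using Proposition~\ref{l:klemma} combined with the Liouville spacing~\eqref{e:C}.

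For each outer piece I would invoke the flow identity
\begin{equation*}
f_l^\tau - f_l^{p/q_k} = \int_{p/q_k}^{\tau} \nu_l\circ f_l^{s}\, ds,
\end{equation*}
which gives $\bigBars{f_l^\tau - f_l^{p/q_k}}_k \le \bars{\tau - p/q_k}\cdot \sup_{s}\bigBars{\nu_l\circ f_l^s}_k$. For $l=k$, Proposition~\ref{l:klemma} at $(k,k)$ yields $\bigBars{\nu_k\circ f_k^s}_k \le c(k,k)\, q_k^{n(k,k)}$, so this piece is at most $\delta_k\cdot c(k,k)\, q_k^{n(k,k)} = 2^{-k-2}$. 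For $l=k-1$, Proposition~\ref{l:klemma} gives only $c(k-1,k)\, q_{k-1}^{n(k-1,k)}$; to fit this into the same budget, I will note that $c$ and $n$ may be replaced, without loss of generality, by their running maxima in the first variable (any pointwise-larger functions still validate Proposition~\ref{l:klemma}), so that they are non-decreasing in $k$, and then use $q_{k-1}\le q_k$ to dominate everything by $c(k,k)\, q_k^{n(k,k)}$. Summing the three pieces, $\bigBars{f_k^\tau - f_{k-1}^\tau}_k \le 2\cdot 2^{-k-2} + 2^{-k-4} < 2^{-k}$, which is exactly the claim.

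For the smoothness consequence I will fix $r\ge 0$ and observe that for every $k\ge r$,
$\bigBars{f_k^\tau - f_{k-1}^\tau}_r \le \bigBars{f_k^\tau - f_{k-1}^\tau}_k \le 2^{-k}$, so $(f_k^\tau)$ is Cauchy and hence convergent in $\CC^r$; its $\CC^r$-limit must coincide with $f^\tau$ (already known to be the $\CC^1$-limit by Proposition~\ref{p:cv}), so $f^\tau \in \CC^r$ for all $r$ and is therefore smooth. The main obstacle I anticipate is precisely the matching of the level-$(k-1)$ growth to the level-$k$ budget: the Liouville condition~\eqref{e:C} was calibrated only against $c(k,k)\, q_k^{n(k,k)}$, and one must absorb the a priori unrelated quantity $c(k-1,k)\, q_{k-1}^{n(k-1,k)}$ into the same bound --- which is exactly what the monotonicity renormalization of $c$ and $n$ accomplishes.
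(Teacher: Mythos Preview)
Your proof is correct and follows essentially the same approach as the paper's: the same telescoping into three pieces, the same use of $(\esti_k)$ for the middle term, and the same flow-integral estimate combined with Proposition~\ref{l:klemma} and the Liouville spacing for the outer terms. The paper dispatches the $l=k-1$ piece with the phrase ``a similar argument gives\ldots'', whereas you correctly observe that this is not quite automatic --- the bound $c(k-1,k)\,q_{k-1}^{n(k-1,k)}$ must be compared to $c(k,k)\,q_k^{n(k,k)}$ --- and you supply the natural fix by monotonizing $c$ and $n$ in the first variable; this is a genuine (if minor) clarification over the paper's presentation, and your explicit deduction of the smoothness consequence is likewise something the paper leaves implicit.
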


\begin{proof}
Let $\tau\in K'$ and $(r_k)_{k\ge 1}$ the sequence of integers such that 
\begin{equation}
\tau\in  \left[\frac{r_k}{q_k}- \frac{2^{-k-2} c(k,k)^{-1} }{q_k^{n(k,k)}}, \frac{r_k}{q_k}+\frac{2^{-k-2} c(k,k)^{-1} }{q_k^{n(k,k)}}\right]\quad\text{for all $k\ge 1$}.
\end{equation}
Let $k\ge 1$. 
$$ \bigBars{ f_k^\tau - f_{k-1}^\tau }_k \le \bigBars{ f_k^\tau -
f_{k}^{r_k/q_k} }_k + \bigBars{ f_k^{r_k/q_k} - f_{k-1}^{r_k/q_k} }_k +
\bigBars{f_{k-1}^{r_k/q_k} - f_{k-1}^\tau }_k.$$
According to \eqref{ik} in Proposition \ref{p:cv}, the central term is less than
$2^{-k-4}$. Now
$$D^n\left( f_k^\tau - f_{k}^{r_k/q_k} \right) = D^n\left(\int_{r_k/q_k}^\tau 
\frac{df_k^t}{dt} dt \right) 
= \int_{r_k/q_k}^\tau D^n(\nu_k \circ f_k^t) dt,  $$
so 
$$ \bigBars{f_k^\tau - f_{k}^{r_k/q_k} }_k \le \left| \tau -
\frac{r_k}{q_k} \right| \bigBars{\nu_k \circ f_k^t }_k \le 2^{-k-2}$$
according to \eqref{e:C} and Proposition \ref{l:klemma}. A similar argument gives 
$$ \bigBars{f_{k-1}^{r_k/q_k}  - f_{k-1}^\tau}_k \le 2^{-k-2}$$
and in the end,
\begin{equation*}
\bigBars{f_k^\tau - f_{k-1}^\tau }_k \le 2^{-k}.
\end{equation*}
\end{proof}

\end{document}